\documentclass[11pt]{amsart}

\usepackage{amsfonts}
\usepackage{amssymb, latexsym, amsthm, verbatim,stmaryrd}
\usepackage{indentfirst}
\usepackage{amsmath}

\allowdisplaybreaks

\setlength{\textwidth}{460pt} \setlength{\hoffset}{-55pt}
\setlength{\parskip}{0.12cm}

\numberwithin{equation}{section}
\theoremstyle{definition}
\newtheorem{Thm}[equation]{Theorem}
\newtheorem{Prop}[equation]{Proposition}
\newtheorem{Cor}[equation]{Corollary}
\newtheorem{Lem}[equation]{Lemma}
\newtheorem{Def}[equation]{Definition}
\newtheorem{Exa}[equation]{Example}
\newtheorem{Rmk}[equation]{Remark}
\newtheorem*{Que*}{Question}

\makeatletter
\def\imod#1{\allowbreak\mkern5mu{\operator@font mod}\,\,#1}
\makeatother

\begin{document}

\title[Isomorphism, Zagier Duality, and Integrality]{Zagier Duality and Integrality of Fourier Coefficients for Weakly Holomorphic Modular Forms}
\author[Yichao Zhang]{Yichao Zhang}
\address{Department of Mathematics, University of Connecticut, Storrs, CT 06269}
\email{yichao.zhang@uconn.edu}
\date{}
\subjclass[2010]{Primary: 11F30, 11F27}
\keywords{Zagier duality, modular form, weakly holomorphic, Miller basis.}

\begin{abstract}
In this note, we generalize the isomorphism in \cite{zhang2014isomorphism} between vector-valued modular forms and scalar-valued modular forms to the case when the discriminant form is not necessarily induced from real quadratic fields. In particular, this general setting includes all of the subspaces with $\epsilon$-conditions of weakly holomorphic modular forms of integral weight, only two special cases of which were treated in \cite{zhang2014isomorphism}. With this established, we shall prove the Zagier duality for the canonical bases. Finally we raise a question on the integrality of the Fourier coefficients of these bases elements, or equivalently we concern the existence of a Miller-like basis for vector-valued modular forms.
\end{abstract}

\maketitle

\section*{Introduction}

\noindent
Let $D$ be a discriminant form of even signature $r$ and let $k\in\mathbb Z$ such that $k\equiv \frac{r}{2}\imod 2$. The discriminant form $D$ determines the Weil representation $\rho_D$ of $\text{SL}_2(\mathbb Z)$ on $\mathbb C[D]$, the group algebra of $D$. Let $N$ be the level of $D$ and $\{e_\gamma\colon \gamma\in D\}$ be the standard basis for $\mathbb C[D]$. Then it is known that there is a uniquely determined Dirichlet character $\chi_D$ modulo $N$ such that $\rho_D(M)e_0=\chi_D(M)e_0$ for each $M\in\Gamma_0(N)$. This enables us to construct a map $\phi$ from vector-valued modular forms of type $\rho_D$ to scalar-valued modular forms of level $N$ and character $\chi_D$. In the other direction, by averaging over the cosets of $\Gamma_0(N)$ in $\text{SL}_2(\mathbb Z)$, a scalar-valued modular form can be sent to a vector-valued modular form; we denote such a map by $\psi$. For the precise definition of these maps, see Definition 3.2.

Some properties of these maps are known. By explicitly working out the formulas for the Weil representation $\rho_D$, Scheithauer \cite{scheithauer2011some} proved the surjectivity of $\psi$ when $N$ is square-free. When $D$ has an odd prime discriminant, by introducing the plus and minus subspaces, Bruinier and Bundschuh \cite{bruinier2003borcherds} proved that the above maps are actually isomorphisms. We generalized their results to the case when the discriminant form is given by the norm form of a real quadratic field by introducing subspaces for sign vectors $\epsilon$ in \cite{zhang2014isomorphism}. In particular, the level $N$ can be any fundamental discriminant of a real quadratic field. The corresponding results, such as rationality of Fourier coefficients and the obstruction theorem for weakly holomorphic modular forms with prescribed principal parts, were also derived to this more general case. Those weakly holomorphic modular forms are holomorphic on the upper half plane but may possess poles at the cusps; they play an important role in Borcherds's theory of automorphic products (See, for example, \cite{borcherds1998automorphic}). The obstruction theorem, proved by Borcherds \cite{borcherds1999gross} for vector-valued modular forms and translated to the case of scalar-valued modular forms when $N$ is a prime by Bruinier and Bundschuh  \cite{bruinier2003borcherds}, states precisely when a polynomial in $q^{-1}$ can be a principal part of some weakly holomorphic modular forms.
In the paper \cite{zhang2014isomorphism}, we considered only the sign vectors that are relevant to Borcherds's automorphic products.

The first goal of this paper is to generalize the results in \cite{zhang2014isomorphism} to all of the sign vectors. More precisely, we shall treat the case when $N$ is the conductor of a primitive quadratic Dirichlet character $\chi$. For example, $N$ can be odd and squarefree, or the fundamental discriminant for a real quadratic field. Therefore, we cover more cases on the level $N$ and, as we shall see, all $\epsilon$-subspaces by varying $D$. See Theorem 3.3 for the isomorphisms and Theorem 4.4 for the obstruction theorem. In this paper, $N$ is the level of $D$ or just $N=|D|$, and one may also construct $D$ from $N$ and a sign vector $\epsilon$. See Remark 4.2 for details.

The second part of this paper is concerned with Zagier duality. In \cite{zagier1998traces}, Zagier proved a duality, known as Zagier duality, between the Fourier coefficients of certain weakly holomorphic modular forms of weight $1/2$ and the Fourier coefficients of certain weakly holomorphic modular forms of weight $3/2$.

Zagier duality has been discovered in many other cases ever since.  See \cite{kim2006traces}, \cite{bringmann2007arithmetic}, \cite{folsom2008duality} and \cite{bringmann2010zagier} in the half integral weight case. 
In the case of integral weights, Cho and Choie \cite{cho2011zagier} proved such duality between vector-valued harmonic weak Maass forms and vector-valued weakly holomorphic modular forms, generalizing Guerzhoy's result \cite{guerzhoy2009weak} for the full level case where Guerzhoy called these dual pairs \emph{grids}. With the nice isomorphism between vector-valued and scalar-valued modular forms proved by Bruinier and Bundschuh in \cite{bruinier2003borcherds}, Rouse \cite{rouse2006zagier} found the duality between weakly holomorphic modular forms of level $p=5,13,17$. His argument involves the explicit decomposition into the plus and the minus subspaces and the explicit action of Hecke operators. Later Choi \cite{choi2006simple} gave a simpler proof of this duality, where everything boils down to the well-known residue theorem on compact Riemann surfaces. Choi's result did not generalized Rouse's result because the existence forces $p=5,13,17$. Interestingly, Duke and Jenkins \cite{duke2008zeros} obtained Zagier duality for level $1$ weakly holomorphic modular forms by finding a Miller-like canonical basis and then a double variable generating series. Choi and Kim \cite{choi2013basis} generalized their results to the case of prime level $p$ such that the genus of the $\Gamma_0^+(p)$ is 0.

With our isomorphisms and the obstruction theorem established in the first part, we will introduce the notion of \emph{reduced modular forms}. These modular forms form a basis for the whole space of weakly holomorphic modular forms with some $\epsilon$-condition and we call it the canonical basis. We then prove the Zagier duality for such canonical bases (Theorem 5.7); namely such canonical bases and their dual bases make up the grids for the Zagier duality. In other words, we will resolve the issue of non-trivial obstructions, remove the dependence on existence, and obtain the complete Zagier duality.

Finally, we consider the integrality of Fourier coefficients $a(n)$ of our bases of reduced modular forms. This result plays a crucial role in \cite{kim2012rank} and \cite{kim2013weakly}, where the coefficients $s(n)a(n)$ represent the multiplicity of roots in some generalized Kac-Moody superalgebras. Here $s(n)$ appears in our isomorphism and is defined to be $2^{\omega((n,N))}$ (Section 1). It is well-known that there exists the Miller basis $\{f_1,f_2,...,f_d\}$ for the space of level $1$ cusp forms (see, for example, Chapter X, Theorem 4.4 in \cite{lang2001introduction}). More precisely, all Fourier coefficients are integral and the first $d$ coefficients of the basis give us the identity matrix. Such a basis can be extended to the space of holomorphic modular forms easily, and further to that of weakly holomorphic modular forms (\cite{duke2008zeros}).

In the case of higher level, cusps other than $\infty$ appear. We prove in this paper that the holomorphy of a modular form with some $\epsilon$-condition at $\infty$ dominates the holomorphy at any other cusp (Proposition 3.4 and Corollary 3.6), so it is natural to consider these higher level modular forms. We shall consider the integrality $s(n)a(n)\in\mathbb Z$ for reduced modular forms $f=\sum_na(n)q^n$, and because of the isomorphism, such integrality for the canonical basis essentially concerns the existence of a Miller-like basis for vector-valued modular forms.
We do not know how to prove such integrality systematically for all $N$, but we can reduce the problem to testing a finite number of reduced modular forms for each fixed $N$ and weight $k$, hence to computational verification by Sturm's theorem.

Here is the layout of this paper according to sections. In Section 1, we provide definitions and fix notations for discriminant forms and modular forms.
In Section 2, we fix one discriminant form and the corresponding sign vector $\epsilon$, and we also reproduce some results in \cite{zhang2014isomorphism} to our more general setting.
In Section 3, we establish the isomorphism and consider the behavior of a modular form with $\epsilon$-condition at other cusps.
From Section 4 on, we shall vary our discriminant form $D$ and hence the $\epsilon$-condition.  We prove the obstruction theorem and consider the rationality of Fourier coefficients in Section 4.
In Section 5, we introduce reduced modular forms and prove the Zagier duality. Some examples are presented. Finally, we propose the problem on the integrality of Fourier coefficients of reduced modular forms and provide some idea and one example on how to solve this computationally in the last section.

\subsection*{Acknowledgments} We thank Professor Henry H. Kim and Professor Kyu-Hwan Lee for very carefully reading a previous version of this paper and making many useful comments. We also thank Professor Kathrin Bringmann for bringing her paper with Olav Richter to my attention. 

\section{Preliminaries}\label{Preliminaries}
\noindent
We recall some definitions on discriminant forms and modular forms, and fix some notations. For more details on discriminant forms, one may consult \cite{conway1998sphere}, \cite{nikulin1980integral}, or \cite{scheithauer2009weil}.

A discriminant form is a finite abelian group $D$ with a quadratic form $q: D\rightarrow \mathbb Q/\mathbb Z$, such that the symmetric bilinear form defined by $(\beta,\gamma)=q(\beta+\gamma)-q(\beta)-q(\gamma)$ is nondegenerate, namely, the map $D\rightarrow \text{Hom}(D,\mathbb Q/\mathbb Z)$ defined by $\gamma\mapsto (\gamma,\cdot)$ is an isomorphism. We shall also write $q(\gamma)=\frac{\gamma^2}{2}$. We define the level of a discriminant form $D$ to be the smallest positive integer $N$ such that $Nq(\gamma)=0$ for each $\gamma\in D$. It is well-known that if $L$ is an even lattice then $L'/L$ is a discriminant form, where $L'$ is the dual lattice of $L$. Conversely, any discriminant form can be obtained this way. With this, we define the signature $\text{sign}(D)\in \mathbb Z/8\mathbb Z$ to be the signature of $L$ modulo $8$ for any even lattice $L$ such that $L'/L=D$.

Every discriminant form can be decomposed into a direct sum of Jordan $p$-components for primes $p$ and each Jordan $p$-component can be written as a direct sum of indecomposible Jordan $q$-components with $q$ powers of $p$. Such decompositions are not unique in general. To fix our notations, we recall the possible indecomposible Jordan $q$-components as follows.

Let $p$ be an odd prime and $q>1$ be a power of $p$. The indecomposible Jordan components with exponent $q$ are denoted by $q^{\delta_q}$ with $\delta_q=\pm 1$; it is a cyclic group of order $q$ with a generator $\gamma$, such that $q(\gamma)=\frac{a}{q}$ and $\delta_q=\left(\frac{2a}{p}\right)$. These discriminant forms both have level $q$.

If $q>1$ is a power of $2$, there are also precisely two indecomposable \emph{even} Jordan components of exponent $q$, denoted $q^{\delta_q 2}$ with $\delta_q=\pm 1$; it is a direct sum of two cyclic groups of order $q$, generated by two generators $\gamma$, $\gamma'$, such that if $\delta_q=1$, we have
\[q(\gamma)=q(\gamma')=0, \quad (\gamma,\gamma')=\frac{1}{q},\] and if $\delta_q=-1$, we have
\[q(\gamma)=q(\gamma')=\frac{1}{q},\quad (\gamma,\gamma')=\frac{1}{q}.\]
Such components have level $q$. There are also \emph{odd} indecomposable Jordan components in this case, denoted by $q_t^{\pm 1}$ with $\pm 1=\left(\frac{2}{t}\right)$ for each $t\in \left(\mathbb Z/8\mathbb Z\right)^\times$. Explicitly, $q^{\pm 1}_t$ is a cyclic group of order $q$ with a generator $\gamma$ such that $q(\gamma)=\frac{t}{2q}$. Clearly, these discriminant forms have level $2q$.

To give a finite direct sum of indecomposable Jordan components of the same exponent $q$, we multiply the signs, add the ranks, and add all subscripts $t$ ($t=0$ if there is no subscript). So in general, the $q$-component of a discriminant form is given by $q_t^{\delta_q n}$ ($t=0$ if $q$ is odd or the form is even). Set $k=k(q^{\delta_q n}_t)=1$ if $q$ is not a square and $\delta_q=-1$, and $0$ otherwise. If $q$ is odd, then define $p$-excess$(q^{\pm n})=n(q-1)+4k\imod 8$, and if $q$ is even, then define oddity$(q^{\pm n}_t)=2$-excess$(q^{\pm n}_t)=t+4k\imod 8$.

Let $D$ be a discriminant form and assume that $D$ has a Jordan decomposition $D=\oplus_q q^{\delta_q n_q}_t$ where the sum is over distinct prime powers $q$. Then
\[p\text{-excess}(D)=\sum_{q: p\mid q} p\text{-excess}(q^{\delta_q n_q}_t).\]
We have the \emph{oddity formula}:
\[\text{sign}(D)+\sum_{p>2}p\text{-excess}(D)=\text{oddity}(D)\imod 8.\]

Throughout this note, $k$ will be an integer and $\mathbb H$ will denote the upper half plane.
Let $M\in\text{GL}_2^+(\mathbb R)$, a real square matrix of size two and of positive determinant, and $f$ be a function on $\mathbb H$. The weight-$k$ slash operator of $M$ is defined as
\[\left(f|_kM\right)(\tau)=(\text{det}(M))^{\frac{k}{2}}(c\tau+d)^{-k}f(M\tau), \quad M=\begin{pmatrix}a&b\\c&d\end{pmatrix},\]
where $\tau$ is the variable on $\mathbb H$ and $M\tau=(a\tau+b)(c\tau+d)^{-1}$.
In $\text{GL}_2^+(\mathbb R)$, we denote
\[
S=\begin{pmatrix}
0&-1\\
1&0
\end{pmatrix},\quad T=\begin{pmatrix}
1&1\\
0&1
\end{pmatrix},\quad I=\begin{pmatrix}
1&0\\
0&1
\end{pmatrix},\quad W(m)=\begin{pmatrix}
0&-1\\
m&0
\end{pmatrix},\quad V(m)=\begin{pmatrix}
m&0\\
0&1
\end{pmatrix},
\] for a positive integer $m$.
We know that $T, S$ are the standard generators for $\text{SL}_2(\mathbb Z)$. Given any discriminant form $D$, let $r$ denote the signature of $D$; we assume throughout this note that $r$ is even and $k\equiv \frac{r}{2}\imod 2$. Let $\{e_\gamma: \gamma\in D\}$ be the standard basis of the group algebra $\mathbb C[D]$. The Weil representation $\rho_D$ attached to $D$ is a unitary representation of $\text{SL}_2(\mathbb Z)$ on $\mathbb C[D]$ such that
\begin{eqnarray*}
\rho_D(T)e_\gamma&=&e(q(\gamma))e_\gamma,\\
\rho_D(S)e_\gamma&=&\frac{i^{-\frac{r}{2}}}{\sqrt{|D|}}\sum_{\beta\in D}e(-(\beta,\gamma))e_\beta,
\end{eqnarray*}
where $e(x)=e^{2\pi i x}$ and $|D|$ is the order of $D$. In particular, we have $\rho_D(-I)e_\gamma=(-1)^{r/2}e_{-\gamma}$. Denote by $\text{Aut}(D)$ the automorphism group of $D$, that is, the group of group automorphisms of $D$ that preserve the norm (or the quadratic form). The action of elements in $\text{Aut}(D)$ and that of $\rho_D$ commute on $\mathbb C[D]$. We caution here that our $\rho_D$ is the same as that in \cite{borcherds1998automorphic} and in \cite{bruinier2003borcherds}, but conjugate to the one used in \cite{scheithauer2009weil} and \cite{scheithauer2011some}.

We denote by $\mathcal A(k,\rho_D)$ the space of functions $F=\sum_{\gamma\in D}F_\gamma e_\gamma$ on $\mathbb H$, valued in $\mathbb C[D]$, such that
\begin{itemize}
\item $F|_kM:=\sum_\gamma F_\gamma |_kM e_\gamma=\rho_D(M) F$ for all $M\in\text{SL}_2(\mathbb Z)$,
\item $F$ is holomorphic on $\mathbb H$ and meromorphic at $\infty$; namely, for each $\gamma\in D$, $F_\gamma$ is holomorphic on $\mathbb H$ and has Fourier expansion at $\infty$ with at most finitely many negative power terms.
\end{itemize}
More explicitly, if $F=\sum_\gamma F_\gamma\in \mathcal A(k,\rho_D)$, then
\[F_\gamma(\tau)=\sum_{n\in q(\gamma)+\mathbb Z, n\gg -\infty}a(\gamma,n)q^n.\] Denote by $\mathcal M(k,\rho_D)$ and $\mathcal S(k,\rho_D)$ the subspace of holomorphic modular forms and the subspace of cusp forms, respectively.
We define $\mathcal A^{\text{inv}}(k,\rho_D)$ to be the subspace of functions that are invariant under $\text{Aut}(D)$. The assumption that $k\equiv \frac{r}{2}$ actually says $F_\gamma=F_{-\gamma}$ for $F\in\mathcal A(k,\rho_D)$, so it must be imposed if we would like to have $F\in\mathcal A^{\text{inv}}(k,\rho_D)$, since $\gamma\mapsto -\gamma$ defines an element in $\text{Aut}(D)$. Similarly, we define $\mathcal M^{\text{inv}}(k,\rho_D)$ and $\mathcal S^\text{inv}(k,\rho_D)$.

For each positive integer $N$, let $\Gamma_0(N)$ denote the congruence subgroup of $\text{SL}_2(\mathbb Z)$ whose elements have left lower entry divisible by $N$. For each Dirichlet character $\chi$ of modulo $N$, we denote by $A(N,k,\chi)$ the space of weakly holomorphic modular forms of level $N$, weight $k$ and character $\chi$; namely the space of holomorphic functions $f$ on $\mathbb H$ such that $f|_kM=\chi(M)f$ for each $M\in\Gamma_0(N)$ and $f$ is meromorphic at cusps. The subspace of holomorphic forms and that of cuspforms are denoted by $M(N,k,\chi)$ and $S(N,k,\chi)$ respectively. Clearly, for these modular form spaces to be non-zero, we need $\chi(-1)=(-1)^k$. In the next section, for fixed discriminant forms $D$, we shall see that $\rho_D$ determines a Dirichlet character $\chi_D$ and $\chi_D(-1)=(-1)^{\frac{r}{2}}$. So the conditions we impose on $k$ are consistent.

Let $f=\sum_{n}a(n)q^n\in A(N,k,\chi)$. Then $P(q^{-1})=\sum_{n<0}a(n)q^n$ is a polynomial without constant term in $q^{-1}$ and we call $P(q^{-1})$ the \emph{principal part} of $f$ (at $\infty$).

For any positive integer $m$, we denote by $\omega(m)$ the number of distinct prime divisors of $n$. For any pair $m,N$ of integers, we denote by $(m,N)$ the greatest common divisor of $m$ and $N$, which should not be confused with the bilinear form. If $N>0$ and $m>1$, we denote $N_m$ to be the $m$-part of $N$; that is, $N_m\mid N$ is positive, contains only primes that divide $m$, and $(N/N_m,m)=1$. If $p$ be a prime and $l$ a non-negative integer, we denote $p^l||N$ if $p^l\mid N$ but $p^{l+1}\nmid N$.

For a Dirichlet character $\chi$ modulo $N$, we shall denote its $p$-component by $\chi_p$, hence $\chi=\prod_{p\mid N}\chi_p$. For each positive divisor $m$ of $N$, we define $\chi_m=\prod_{p\mid m}\chi_p$ and $\chi_m'=\prod_{p\nmid m}\chi_p$.
Let $W(\chi)$ denote the Gauss sum of $\chi$ and we know that if $p>2$ and $\chi_p=\left(\frac{\cdot}{p}\right)$, then $W(\chi_p)=\varepsilon_p p^{\frac{1}{2}}$; here $\varepsilon_p=1$ if $p\equiv 1\imod 4$, and $i$ if $p\equiv 3\imod 4$. Similarly,
\begin{itemize}
\item if $\chi_2=\left(\frac{-1}{\cdot}\right)$, $W(\chi_2)=\varepsilon_2 4^{\frac{1}{2}}$ with $\varepsilon_2=i$;
\item if $\chi_2=\left(\frac{2}{\cdot}\right)$, $W(\chi_2)=\varepsilon_2 8^{\frac{1}{2}}$ with $\varepsilon_2=1$;
\item if $\chi_2=\left(\frac{-2}{\cdot}\right)$, $W(\chi_2)=\varepsilon_2 8^{\frac{1}{2}}$ with $\varepsilon_2=i$.
\end{itemize}
Here $\varepsilon_p$ is not to be confused with the sign vectors $\epsilon_p$ defined in the following section.

For integers $i,j$, we define $\delta_{i,j}=1$ if $i=j$, and $0$ otherwise.

\section{Discriminant Forms and $\epsilon$-Condition}

\noindent
In this section, we first fix a discriminant form and investigate its properties, and then define the $\epsilon$-condition on scalar-valued modular forms.

From now on and until the end of Section 3, we fix a discriminant form $D=\oplus_p D_p$ of the following form:
if $p>2$, then $D_p=p^{\delta_p}$ with $\delta_p\in\{\pm 1\}$;
$D_2$ is trivial, or $2^{+2}_t$ with $t\in\{\pm 2\}$, or $2_{t_1}^{+1}\oplus 4_{t_2}^{\delta_2}$ with $\delta_2=\left(\frac{2}{t_2}\right)$ and $t_1\in\{\pm 1\}$, $t_2\in\{\pm 1,\pm 3\}$. We denote the level of $D$ by $N$; note that $|D|=N$. Note that $D_2$ means something else in \cite{scheithauer2009weil}.

Now let $D^*=D[-1]=\oplus_pD_p^*$ be the discriminant form with the same group but with quadratic form $q^*=-q$. We call $D^*$ the dual of $D$. It is not hard to see that if $p>2$ then $D_p^*=p^{\delta_p^*}$ with $\delta_p^*=\left(\frac{-1}{p}\right)\delta_p$. If $D_2=2^{+2}_t$ , then $D_2^*=2^{+2}_{-t}$, and if $D_2=2_{t_1}^{+1}\oplus 4_{t_2}^{\delta_2}$ then $D_2^*=2_{-t_1}^{+1}\oplus 4_{-t_2}^{\delta_2}$. It is clear that $D^*$ has the same level $N$ as $D$ does.

For a modular form $F\in\mathcal A^\text{inv}(k,\rho_D)$, define $W$ the span of $F_\gamma$, $\gamma\in D$, and $W'$ the span of $F_0|M$, $M\in SL_2(\mathbb Z)$. Let $W_0$ be the subspace of $T$-invariant functions in $W$.

We reproduce a few lemmas but omit their proofs since the corresponding proofs in \cite{zhang2014isomorphism} can be carried over.  They correspond to Proposition 2.3, Lemma 3.1, Lemma 3.2 and Proposition 3.3 in \cite{zhang2014isomorphism}, respectively.

\begin{Lem}\label{Invariance}
If $\beta,\gamma\in D$ with $q(\beta)=q(\gamma)$, then there exists $\sigma\in\text{Aut}(D)$ such that $\sigma\beta=\gamma$.
\end{Lem}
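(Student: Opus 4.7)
The strategy is to reduce prime by prime using the Jordan decomposition $D=\oplus_p D_p$, and then handle each small $p$-component by direct inspection.

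First, since each $D_p$ has exponent a power of $p$, the value $q(\beta_p)$ lies in the $p$-primary component of $\mathbb Q/\mathbb Z$ under the canonical splitting $\mathbb Q/\mathbb Z\cong\oplus_p(\mathbb Z[1/p]/\mathbb Z)$. Writing $\beta=\sum_p\beta_p$ and $\gamma=\sum_p\gamma_p$, the identity $q(\beta)=\sum_pq(\beta_p)$ in $\mathbb Q/\mathbb Z$ therefore implies $q(\beta_p)=q(\gamma_p)$ for every $p$. Because $\prod_p\text{Aut}(D_p)\subseteq\text{Aut}(D)$, it suffices to produce $\sigma_p\in\text{Aut}(D_p)$ with $\sigma_p\beta_p=\gamma_p$ for each $p$ separately; then $\sigma=\oplus_p\sigma_p$ does the job.

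For odd primes $p$, by hypothesis $D_p=p^{\delta_p}$ is cyclic of prime order $p$. Fixing a generator $\gamma_0$ with $q(\gamma_0)=a/p$ (with $(a,p)=1$), every element is of the form $k\gamma_0$ for some $k\in\mathbb Z/p\mathbb Z$, and $q(k\gamma_0)=k^2a/p$. The equality $q(k_1\gamma_0)=q(k_2\gamma_0)$ in $\mathbb Q/\mathbb Z$ forces $k_1^2\equiv k_2^2\pmod p$, i.e.\ $k_1\equiv\pm k_2\pmod p$. Either the identity or the negation $\gamma_0\mapsto-\gamma_0$ (which manifestly preserves $q$) is then the desired automorphism.

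For $p=2$, the component $D_2$ is explicitly restricted by the setup to be trivial, or $2^{+2}_t$ with $t\in\{\pm 2\}$, or $2^{+1}_{t_1}\oplus 4^{\delta_2}_{t_2}$ with the indicated constraints; in all cases $|D_2|\leq 8$. I would enumerate the (at most eight) elements of $D_2$ in terms of the distinguished generators, tabulate the values of $q$, and in each fiber of $q$ exhibit automorphisms given by sign changes on generators and, when applicable, by swapping the two cyclic summands, showing that the fiber is a single $\text{Aut}(D_2)$-orbit. The constraints on $t$, $t_1$, $t_2$, $\delta_2$ are exactly what force the requisite symmetry so that each fiber is connected under these moves.

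The routine part is the prime-by-prime reduction and the odd case; the main obstacle is the $2$-adic case analysis, where one has to verify that the permitted list of $D_2$'s really does make $\text{Aut}(D_2)$ transitive on every level set of $q$. This is the same pattern of argument as in Proposition 2.3 of \cite{zhang2014isomorphism}, with the only change being the extension of the admissible $D_2$ from the real-quadratic setting to the slightly larger list used here; the verification is finite and mechanical.
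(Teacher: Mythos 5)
Your proposal is correct and matches the paper's intended argument: the paper omits the proof of this lemma, stating that Proposition 2.3 of \cite{zhang2014isomorphism} carries over, and that carried-over argument is precisely your prime-by-prime reduction via $\mathbb Q/\mathbb Z\cong\oplus_p\mathbb Z[1/p]/\mathbb Z$ followed by inspection of the admissible components. The one step you leave unexecuted, the $2$-adic enumeration, does check out: negation alone makes each fiber of $q$ a single orbit for $2^{+1}_{t_1}\oplus 4^{\delta_2}_{t_2}$ (the fiber values $0$, $1/2$, $t_1/4$, $t_1/4+1/2$, $t_2/8$, $(2t_1+t_2)/8$ are pairwise distinct types), and negation together with the swap of the two order-two generators handles $2^{+2}_{\pm 2}$, so there is no gap.
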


\begin{Lem}\label{Term-Inclusion}
Let $S\subset D$. If $\sum_{\gamma\in S}F_\gamma\in W'$, then $F_\gamma\in W'$ for any $\gamma\in S$.
\end{Lem}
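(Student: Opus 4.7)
The plan is to exploit the $\text{Aut}(D)$-invariance of $F$ together with the $T$-eigenspace structure on the space $W$. First, since an automorphism $\sigma\in\text{Aut}(D)$ acts on $\mathbb C[D]$ by $\sigma\cdot e_\gamma=e_{\sigma\gamma}$, the hypothesis $F\in\mathcal A^{\text{inv}}(k,\rho_D)$ forces $F_{\sigma\gamma}=F_\gamma$ for all $\gamma\in D$. Combined with Lemma \ref{Invariance}, this shows $F_\beta=F_\gamma$ whenever $q(\beta)=q(\gamma)$, so $F_\gamma$ depends only on the norm $q(\gamma)$. Writing $S=\bigsqcup_c S_c$ with $S_c=\{\gamma\in S\colon q(\gamma)=c\}$ and choosing a representative $\gamma_c$ of each nonempty $S_c$, the hypothesis becomes
\[\sum_{\gamma\in S}F_\gamma=\sum_c|S_c|\,F_{\gamma_c}\in W'.\]

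Next, I would exploit the $T$-action. From $F|_kT=\rho_D(T)F$ we read off $F_\gamma(\tau+1)=e(q(\gamma))F_\gamma(\tau)$, so each $F_{\gamma_c}$ is a $T$-eigenvector with eigenvalue $e(c)$. Distinct classes $c\in\mathbb Q/\mathbb Z$ yield distinct eigenvalues, so the nonzero $F_{\gamma_c}$ lie in distinct $T$-eigenspaces of $W$. Consequently $W$ decomposes as $W=\bigoplus_c V_c$, where $V_c=\mathbb C\cdot F_{\gamma_c}$ is the $T$-eigenspace for eigenvalue $e(c)$ (with the convention that $V_c=0$ if $F_{\gamma_c}=0$, in which case nothing needs to be proved for that orbit).

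Finally, I observe that $W'$ is itself $T$-stable: for any $M\in\text{SL}_2(\mathbb Z)$, $(F_0|_kM)|_kT=F_0|_k(MT)\in W'$. Hence $W'=\bigoplus_c (W'\cap V_c)$ inherits the eigenspace decomposition, and projecting $\sum_c|S_c|F_{\gamma_c}\in W'$ onto the $V_c$ summand yields $|S_c|F_{\gamma_c}\in W'$ for each $c$ with $S_c\neq\emptyset$. Since $|S_c|$ is then a positive integer, we conclude $F_{\gamma_c}\in W'$, and by the first step $F_\gamma=F_{\gamma_c}\in W'$ for every $\gamma\in S_c\subset S$. There is no real obstacle here; the only conceptual content, and the step I would emphasize in the write-up, is the reduction from ``depending on $\gamma\in D$'' to ``depending only on the $T$-eigenvalue $e(q(\gamma))$'' via $\text{Aut}(D)$-invariance and Lemma \ref{Invariance}, after which the $T$-stability of $W'$ and the eigenspace projection make everything formal.
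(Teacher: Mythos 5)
Your proof is correct and follows essentially the same route as the proof the paper relies on (it omits the argument, citing Lemma 3.1 of \cite{zhang2014isomorphism}): $\text{Aut}(D)$-invariance together with Lemma \ref{Invariance} shows $F_\gamma$ depends only on $q(\gamma)$, and one then separates the distinct $T$-eigenvalues $e(q(\gamma))$ inside the $T$-stable space $W'$ — there done by averaging $g|_kT^n$ against $e(-nq(\gamma))$ over $n$ modulo $N$, which is the same eigenprojection you perform. The only step you leave implicit is the containment $W'\subseteq W$, which your identity $W'=\bigoplus_c(W'\cap V_c)$ presupposes; it is immediate from the transformation law, since $F_0|_kM=\sum_{\gamma}\rho_D(M)_{0\gamma}F_\gamma\in W$, or you can bypass it altogether by applying the Lagrange interpolation polynomial in the operator $g\mapsto g|_kT$ directly to the element $\sum_c|S_c|F_{\gamma_c}$ of $W'$.
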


\begin{Lem}\label{T-invariance}
$W_0=\text{span}_\mathbb{C}\{F_0\}$. Actually, if $f=\sum_{\gamma\in D}a_\gamma F_\gamma\in W_0$, then $f=a_0F_0$.
\end{Lem}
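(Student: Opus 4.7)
The plan is to exploit the diagonal action of $T$ on $\mathbb{C}[D]$: since $\rho_D(T) e_\gamma = e(q(\gamma)) e_\gamma$, one has $F_\gamma|_k T = e(q(\gamma)) F_\gamma$, so a $T$-invariant element of $W$ should be supported on the norm-zero classes. First I would combine the $\text{Aut}(D)$-invariance of $F$ with Lemma \ref{Invariance}: whenever $q(\beta)=q(\gamma)$ in $\mathbb{Q}/\mathbb{Z}$ there is $\sigma\in\text{Aut}(D)$ with $\sigma\beta=\gamma$, hence $F_\beta = F_\gamma$. Thus $F_\gamma$ depends only on the class $a:=q(\gamma)$, and I will write $f_a$ for this common function. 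A short case check using the explicit list at the top of Section 2 --- the odd-prime pieces $p^{\delta_p}$ have $q(k\gamma_0)=k^2 a/p$ with $(a,p)=1$, so only $k=0$ yields zero, and each of the three allowed shapes of $D_2$ admits a direct enumeration of $q$-values --- shows that $\gamma=0$ is the \emph{unique} element of $D$ with $q(\gamma)=0$. This uniqueness is precisely what makes the coefficient $a_0$ in the claim unambiguous.

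Next I would regroup the given presentation by norm class, setting $c_a:=\sum_{q(\gamma)=a}a_\gamma$, so that
\[
f=\sum_{a\in q(D)} c_a f_a.
\]
Applying $T$ term by term and using $f|_k T=f$ yields
\[
\sum_{a\in q(D)} c_a\bigl(e(a)-1\bigr) f_a = 0.
\]
The Fourier expansion of $f_a$ is supported in $a+\mathbb{Z}$, so the nonzero $f_a$'s for distinct $a\in\mathbb{Q}/\mathbb{Z}$ have Fourier support in pairwise disjoint cosets of $\mathbb{Z}$ inside $\mathbb{Q}$; they are therefore linearly independent. Hence $c_a\bigl(e(a)-1\bigr) f_a = 0$ for every $a$, and when $a\not\equiv 0\pmod 1$ the factor $e(a)-1$ is nonzero, forcing $c_a f_a = 0$. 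The sum then collapses to $f = c_0 F_0$, and the uniqueness observation above forces $c_0 = a_0$, giving $f=a_0 F_0$ and in particular $W_0=\text{span}_\mathbb{C}\{F_0\}$.

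The main obstacle, such as it is, lies in the case check that $\gamma=0$ is the unique norm-zero element for our restricted class of $D$: without this hypothesis (for instance for a generic even $2$-component $2^{+2}$) other $F_\gamma$ with $q(\gamma)=0$ would appear in $W_0$, and one could only assert $f=c F_0$ with $c=\sum_{q(\gamma)=0}a_\gamma$; the restrictions on $D_2$ imposed at the start of Section 2 are exactly what rule this out. Everything else is formal manipulation with the $T$-eigenvalues $e(a)$ and the disjointness of Fourier supports, parallel to Proposition 3.3 of \cite{zhang2014isomorphism}.
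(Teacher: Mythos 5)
Your proof is correct, and it follows essentially the same route as the argument the paper defers to (Lemma 3.2 of \cite{zhang2014isomorphism}): use $\text{Aut}(D)$-invariance with Lemma \ref{Invariance} so that $F_\gamma$ depends only on $q(\gamma)$, decompose by the $T$-eigenvalues $e(q(\gamma))$ together with the disjointness of the Fourier supports in the cosets $q(\gamma)+\mathbb Z$, and invoke the fact that $0$ is the only norm-zero element for the discriminant forms fixed in Section 2. Your explicit case check of that last uniqueness point is exactly the ingredient that makes the statement $f=a_0F_0$ (rather than $f=cF_0$ with $c$ a sum of coefficients) valid, so nothing is missing.
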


\begin{Lem}\label{Uniqueness}
$W=W'$. In particular, if $F_0=0$, then $F=0$.
\end{Lem}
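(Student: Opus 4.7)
The plan is to prove both inclusions $W\subseteq W'$ and $W'\subseteq W$, with the second being essentially formal and the first requiring one explicit use of the Weil representation together with Lemma \ref{Term-Inclusion}.

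First I would dispatch $W'\subseteq W$. From the transformation law $F|_kM=\rho_D(M)F$, expanding the right-hand side as $\sum_\gamma F_\gamma\,\rho_D(M)e_\gamma$ and comparing the $e_0$-components shows that $F_0|_kM$ is a $\mathbb{C}$-linear combination of the $F_\gamma$'s (with coefficients given by matrix entries of $\rho_D(M)$). Since every generator of $W'$ is therefore in $W$, this inclusion follows.

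For the reverse inclusion $W\subseteq W'$, the idea is to exhibit a single element of $W'$ from which all $F_\gamma$ can be extracted via Lemma \ref{Term-Inclusion}. The natural candidate comes from applying the Weil representation formula for $S$: using $\rho_D(S)e_\gamma=\frac{i^{-r/2}}{\sqrt{|D|}}\sum_{\beta}e(-(\beta,\gamma))e_\beta$ and taking $\gamma=0$, the $e_0$-coefficient of $\rho_D(S)F$ works out to $\frac{i^{-r/2}}{\sqrt{|D|}}\sum_{\gamma\in D}F_\gamma$. On the other hand this same coefficient equals $F_0|_kS$, which by definition lies in $W'$. Hence $\sum_{\gamma\in D}F_\gamma\in W'$, and applying Lemma \ref{Term-Inclusion} with $S=D$ yields $F_\gamma\in W'$ for every $\gamma\in D$, giving $W\subseteq W'$.

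The ``in particular'' statement then follows immediately: if $F_0=0$, every generator $F_0|_kM$ of $W'$ vanishes, so $W'=\{0\}=W$, which forces all $F_\gamma=0$ and therefore $F=0$. The only nontrivial step in the whole argument is the passage from the single relation $\sum_\gamma F_\gamma\in W'$ to the individual memberships $F_\gamma\in W'$, and this is precisely the content of Lemma \ref{Term-Inclusion} (itself proved via the $T$-invariance analysis underlying Lemma \ref{T-invariance}), so no genuine obstacle is expected beyond correctly invoking the already established lemmas.
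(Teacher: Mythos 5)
Your argument is correct and is essentially the proof the paper has in mind (it defers to Proposition 3.3 of \cite{zhang2014isomorphism}, whose argument carries over verbatim): compare $e_0$-components of $F|_kM=\rho_D(M)F$ to get $W'\subseteq W$, note $F_0|_kS=\frac{i^{-r/2}}{\sqrt{|D|}}\sum_{\gamma\in D}F_\gamma$ so that $\sum_\gamma F_\gamma\in W'$, and then invoke Lemma \ref{Term-Inclusion} with $S=D$ to conclude $W\subseteq W'$, the vanishing statement being an immediate consequence. No gaps; your use of the nonzero constant and of Lemma \ref{Term-Inclusion} is exactly as intended.
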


Define the primitive Dirichlet character $\chi=\chi_D=\prod_p\chi_p$ of modulus $N$ as follows: if $p>2$ and $p\mid N$, then $\chi_p=\left(\frac{\cdot}{p}\right)$; $\chi_2$ is trivial if $D_2$ is trivial, $\chi_2=\left(\frac{-4}{\cdot}\right)$ if $D_2=2^{+2}_t$, and $\chi_2=\left(\frac{-2a}{\cdot}\right)$ with $a=\left(\frac{-1}{t_1t_2}\right)$ if $D_2=2^{+1}_{t_1}\oplus 4_{t_2}^{\delta_2}$. Such a character is determined by the Weil representation associated to $D$, justifying the notation.

For each sign vector $\epsilon'=(\epsilon'_p)_{p\mid N}\in\{\pm 1\}^{\omega(N)}$, we define the subspace in $A(N,k,\chi_D)$
\[A^{\epsilon'}(N,k,\chi_D)=\left\{\left.f=\sum_n a(n)q^n\in A(N,k,\chi_D)\right|
a(n)=0 \text{ if } \chi_p(n)=-\epsilon'_p \text{ for some }p\mid N\right\},\]
and we know that $A(N,k,\chi_D)=\oplus_{\epsilon'}A^{\epsilon'}(N,k,\chi_D)$ (\cite[Proposition 3.10]{zhang2014isomorphism}) where $\epsilon'$ runs through the whole set $\{\pm 1\}^{\omega(N)}$. The following lemma is Corollary 3.12 in \cite{zhang2014isomorphism}. See next section for the meaning of these operators.

\begin{Lem} Assume $f\in A(N,k,\chi_D)$. Then
$f\in A^{\epsilon'}(N,k,\chi_D)$ if and only if \[f|_kU(N_p)\eta_p=\epsilon_p'\varepsilon_p\chi_p(-1)N_p^{\frac{k-1}{2}}f, \quad\text{ for each } p\mid N.\]
\end{Lem}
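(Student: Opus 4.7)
The plan is to reduce the statement to a single Fourier-coefficient identity: the operator $U(N_p)\eta_p$ should act on $q$-expansions as a $\chi_p$-twist operator (up to the advertised scalar), and then the $\epsilon'_p$-condition at $p$ is simply the coefficient-wise reading of the eigenvalue equation.

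First I would recall from the next section the explicit definitions of $U(N_p)$ (the standard $U$-operator at the $p$-part of $N$) and $\eta_p$ (an Atkin--Lehner–style normalized matrix attached to $\chi_p$), and verify the key identity
\[
f|_k U(N_p)\eta_p \;=\; \varepsilon_p\,\chi_p(-1)\,N_p^{(k-1)/2}\sum_n \chi_p(n)\,a(n)\,q^n.
\]
This is a direct slash-operator computation: one writes $U(N_p)$ as the sum $N_p^{k/2-1}\sum_{b \bmod N_p} f|_k\bigl(\begin{smallmatrix}1&b\\0&N_p\end{smallmatrix}\bigr)$, composes with $\eta_p$ matrix-by-matrix, and resums the additive exponentials $e(bn/N_p)$. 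The inner sum produces the Gauss sum $W(\chi_p)=\varepsilon_p N_p^{1/2}$ (times a $\chi_p(-1)$ from the composition with the Atkin--Lehner part), which accounts for the constants on the right-hand side.

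With this identity in hand, the lemma becomes nearly tautological. The equation $f|_k U(N_p)\eta_p = \epsilon'_p\,\varepsilon_p\,\chi_p(-1)\,N_p^{(k-1)/2}\,f$ is equivalent, after dividing by the common constant and matching Fourier coefficients, to $\chi_p(n)\,a(n)=\epsilon'_p\,a(n)$ for every $n$, i.e.\ to $a(n)=0$ whenever $\chi_p(n)=-\epsilon'_p$ (the case $\chi_p(n)=0$, which is not constrained by the defining condition of $A^{\epsilon'}$, is handled by the direct sum decomposition $A(N,k,\chi_D)=\bigoplus_{\epsilon'}A^{\epsilon'}(N,k,\chi_D)$ recalled immediately before the lemma: any eigenvector of $U(N_p)\eta_p$ for the stated eigenvalue automatically lies in a single $\epsilon'$-summand). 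Running this equivalence independently over all $p\mid N$ yields both directions.

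The main obstacle is the clean verification of the twisting identity, chiefly at $p=2$: there $\chi_2$ can be $\left(\frac{-4}{\cdot}\right)$, $\left(\frac{8}{\cdot}\right)$, or $\left(\frac{-8}{\cdot}\right)$ with conductor $4$ or $8$ and Gauss-sum signs $\varepsilon_2\in\{1,i\}$, so each case needs separate bookkeeping. Nevertheless, each reduces to a Gauss-sum evaluation already recorded in Section~1, so the argument is strictly parallel to the one behind Corollary~3.12 of \cite{zhang2014isomorphism}, with only minor modifications needed to cover the enlarged family of admissible $D_2$.
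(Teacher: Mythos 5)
Your central identity is false, and the argument collapses with it: $U(N_p)\eta_p$ is \emph{not} the $\chi_p$-twist operator. Carry out the coset computation you describe in the simplest case $N=p$ an odd prime (so $f|\eta_p=f|W(p)$ by Lemma 3.1(2)). The matrices $\begin{pmatrix}1&j\\0&p\end{pmatrix}W(p)$ with $j\not\equiv 0\imod p$ do factor through $\Gamma_0(p)$ and resum, via the Gauss sum, to the twist term you want; but the coset $j\equiv 0$ contributes an additional term, and one finds
\[
f|_kU(p)W(p)\;=\;\varepsilon_p\,\chi_p(-1)\,p^{\frac{k-1}{2}}\sum_n\chi_p(n)a(n)q^n\;+\;p^{k-1}\,\bigl(f|_kW(p)\bigr)(p\tau).
\]
The second term is supported on exponents divisible by $p$ and is nonzero in general (its constant term is essentially the value of $f$ at the cusp $0$). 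If your claimed identity were correct, then comparing coefficients at $p\mid n$ in the eigenvalue equation would force $a(n)=0$ for every such $n$; in particular no form with nonzero constant term, such as the Eisenstein series $E^{\epsilon}$ of Section 4, could be an eigenvector, so the ``only if'' direction of the lemma would fail. Your attempt to handle the $\chi_p(n)=0$ coefficients by invoking the decomposition $A(N,k,\chi_D)=\oplus_{\epsilon'}A^{\epsilon'}(N,k,\chi_D)$ is circular: the issue is not which summand $f$ lies in, but that at $p$-divisible exponents the eigenvalue equation imposes a genuine extra relation between the coefficients $a(pn)$ and the Fourier coefficients of $f|\eta_p$, which your twist identity silently erases.

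Showing that this extra relation holds automatically for $f\in A^{\epsilon'}$ --- equivalently, computing how $\eta_p$ (and, for composite $N$, the term $f|\eta_pV(N_p)$ coming from the omitted coset) acts on the part of $f$ supported on $p\mid n$ --- is the real content of the statement. That is exactly what the proof of Corollary 3.12 in \cite{zhang2014isomorphism} supplies, and the present paper deliberately cites it rather than reproving it; your proposal, as written, proves only the easy matching of coefficients with $\chi_p(n)=\pm1$ and leaves the essential case open.
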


Among these subspaces, we specify one of them, $A^{\epsilon}(N,k,\chi)$, with
$\epsilon=(\epsilon_p)_{p\mid N}$ defined as follows: if $p>2$, then $\epsilon_p=\chi_p(2N/p)\delta_p$; if $D_2=2_t^{+2}$, $\epsilon_2=\chi_2(Nt/8)$, and if $D_2=2^{+1}_{t_1}\oplus 4_{t_2}^{\delta_2}$, we set $\epsilon_2=\chi_2(t_2N/8)$.
By the definition of $D^*$, it can be seen easily that the sign vector $\epsilon^*$ for $D^*$ is given by $\epsilon^*_p=\chi_p(-1)\epsilon_p$ for each $p\mid N$.

For our choice of $N$, it is well-known that the inequivalent cusps of $\Gamma_0(N)$ are represented precisely by $\frac{1}{m}$ with $m$ running over the positive divisors of $N$. In particular, $\frac{1}{N}\sim\infty$ and $1\sim 0$ as cusps for $\Gamma_0(N)$.

\section{Correspondence between Vector-Valued Modular Forms and Scalar-Valued Modular Forms}

\noindent
In this section, we will generalize the results in \cite{zhang2014isomorphism}. Actually, we show that the isomorphism in \cite{zhang2014isomorphism} and other results also hold for our general sign vector $\epsilon$. We shall be brief on proofs in this section and for more details please see \cite{zhang2014isomorphism}.

Before we establish the isomorphism between $\mathcal A^\text{inv}(k,\rho_D)$ and $A^\epsilon(N,k,\chi_D)$, we first recall some Hecke operators. For $m\mid N$, the Hecke operator $U(m)$ on $A(N,k,\chi_D)$ is defined as
\[(f|_kU(m))(\tau)=m^{\frac{k}{2}-1}\sum_{j\imod m}f\left|_k\begin{pmatrix}
1&j\\
0&m
\end{pmatrix}\right..\]
If $f=\sum_{n\in\mathbb Z}a(n)q^n$, then $f|_kU(m)=\sum_{n\in\mathbb Z}a(mn)q^n$.

We shall need the so-called $W$-operators; here we follow Miyake's notations (\cite{miyake2006modular}) and denote them by $\eta_m$.
For a positive divisor $m$ of $N$, choose $\gamma_m\in\text{SL}_2(\mathbb Z)$ such that
\[\gamma_m\equiv
\left\{
\begin{array}{cl}
S&\imod (N_m)^2\\
I &\imod (N/N_m)^2
\end{array}
\right.,
\] and define $\eta_m=\gamma_mV(N_m)$ and denote $\eta_m'=\eta_{N/N_m}$. Recall that $N_m$ means the $m$-part of $N$. For completeness, we copy the following lemma from Lemma 1.1 in \cite{zhang2014isomorphism}. See Section 1 for the meaning of other notations.

\begin{Lem}\label{Eta-operator} Let $f\in A(N,k,\chi_D)$ and $m,m_1,m_2$ be positive divisors of $N$.

(1) The action $f|_k\eta_m$ is independent of the choice of $\gamma_m$ and it defines an operator on $A(N,k,\chi_D)$.

(2) $f|_k\eta_N=f|_kW(N)$.

(3) If $(m_1,m_2)=1$, $f|_k\eta_{m_1m_2}=\chi_{m_2}(N_{m_1})f|_k\eta_{m_1}\eta_{m_2}$. In particular, $f|_k\eta_m\eta_m'=\chi_m'(N_m)f|_kW(N)$. Moreover, if $m=p_1p_2\cdots p_k$ is square-free, then $f|_k\eta_m=\prod_{i<j}\chi_{p_j}(N_{p_i})f|_k\eta_{p_1}\eta_{p_2}\cdots\eta_{p_k}$.

(4) $f|_k\eta^2_m=\chi_m(-1)\chi_m'(N_m)f$.

(5) If $(m_1,m_2)=1$, $f|_k\eta_{m_1}U({m_2})=\chi_{m_1}({m_2})f|_kU({m_2})\eta_{m_1}$.
\end{Lem}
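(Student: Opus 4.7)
The plan is to establish (1)--(5) by direct matrix computation, leveraging the defining congruences on $\gamma_m$ together with the Chinese Remainder Theorem, and carefully tracking $\chi_D$-values of the resulting elements of $\Gamma_0(N)$. For part (1), I would observe that two valid choices $\gamma_m, \gamma_m'$ satisfy $\gamma_m(\gamma_m')^{-1} \equiv I$ modulo both $N_m^2$ and $(N/N_m)^2$; since $(N_m, N/N_m)=1$ from the definition of the $m$-part, CRT gives $\gamma_m(\gamma_m')^{-1} \equiv I \pmod{N^2}$. This matrix lies in $\Gamma_0(N)$ with upper-left entry $\equiv 1 \pmod N$, so $\chi_D$ evaluates to $1$ on it, yielding independence of $\gamma_m$. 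Part (2) is immediate: $\gamma_N = S$ is an allowable choice (the congruence modulo $(N/N)^2=1$ is vacuous), and the matrix identity $SV(N) = W(N)$ finishes.

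Part (3) is the core computation. For coprime $m_1, m_2$, I would rewrite
\[
\eta_{m_1}\eta_{m_2} = \gamma_{m_1}\bigl(V(N_{m_1}) \gamma_{m_2} V(N_{m_1})^{-1}\bigr) V(N_{m_1}N_{m_2}) = \gamma_{m_1} \widetilde\gamma_{m_2}\, V(N_{m_1m_2}).
\]
Since $(m_1,m_2)=1$ one has $N_{m_1} \mid N/N_{m_2}$, so $\gamma_{m_2} \equiv I \pmod{N_{m_1}^2}$ forces the lower-left entry of $\gamma_{m_2}$ to be divisible by $N_{m_1}^2$, giving $\widetilde\gamma_{m_2} \in \mathrm{SL}_2(\mathbb Z)$. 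A congruence check then shows $\gamma_{m_1}\widetilde\gamma_{m_2}$ is a valid $\gamma_{m_1m_2}$ up to multiplication by an element of $\Gamma_0(N)$ whose $\chi_D$-value works out to $\chi_{m_2}(N_{m_1})$, coming from the $S$-congruence of $\gamma_{m_1}$ modulo the prime powers dividing $m_2$. The squarefree formula follows by induction on the number of prime factors, and the $\eta_m\eta_m'$ identity is the special case $m_1=m$, $m_2=N/N_m$, combined with $\eta_N = W(N)$ from (2).

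Part (4) then reduces to applying the same conjugation trick to $(\gamma_m V(N_m))^2$ and using $\gamma_m^2 \equiv -I \pmod{N_m^2}$ and $\gamma_m^2 \equiv I \pmod{(N/N_m)^2}$; evaluating $\chi_D$ on the connecting element yields the factor $\chi_m(-1)\chi_m'(N_m)$. Part (5) follows by expanding
\[
f|_k\eta_{m_1}U(m_2) = m_2^{k/2-1}\sum_{j\bmod m_2} f\Bigl|_k \gamma_{m_1} V(N_{m_1})\begin{pmatrix}1 & j \\ 0 & m_2\end{pmatrix},
\]
reindexing $j \mapsto N_{m_1}^{-1} j$ modulo $m_2$ (valid because $(N_{m_1},m_2)=1$), and regrouping the matrices as $\begin{pmatrix}1 & j \\ 0 & m_2\end{pmatrix}\gamma_{m_1}'\, V(N_{m_1})$ with $\gamma_{m_1}'$ another valid choice; the reindexing introduces the twist $\chi_{m_1}(m_2)$ via the character of the connecting $\Gamma_0(N)$-element. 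Throughout, the main obstacle is the bookkeeping of character values under CRT, and part (1) is what frees us to swap $\gamma_m$ for congruent choices wherever it simplifies the computation.
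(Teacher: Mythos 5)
The paper never proves this lemma: it is quoted verbatim from Lemma 1.1 of \cite{zhang2014isomorphism}, so the only available comparison is with the standard Atkin--Lehner--Miyake style computation that reference carries out, and your plan is exactly that kind of argument. Your parts (1) (well-definedness), (2), (3) and (5) are sound as sketched: I checked that the connecting element $C=\gamma_{m_1}\widetilde\gamma_{m_2}\,\gamma_{m_1m_2}^{-1}$ lies in $\Gamma_0(N)$ and has $\chi_D$-value $\chi_{m_2}(N_{m_1})$ (reading it modulo $N_{m_1}^2$, $N_{m_2}^2$ and $(N/N_{m_1m_2})^2$ -- note the factor really comes from the $S$-congruence of $\gamma_{m_2}$ interacting with the conjugation by $V(N_{m_1})$, not from $\gamma_{m_1}$ as you say), and in (5) the element $\gamma_{m_1}\left(\begin{smallmatrix}1&N_{m_1}j\\0&m_2\end{smallmatrix}\right)\gamma_{m_1}^{-1}\left(\begin{smallmatrix}1&j''\\0&m_2\end{smallmatrix}\right)^{-1}$ with $j''\equiv N_{m_1}j \pmod{m_2}$ is in $\Gamma_0(N)$ with character value $\chi_{m_1}(m_2)$, as you predict.

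Two genuine gaps remain. First, in (4) the phrase ``the same conjugation trick'' fails as stated: $V(N_m)\gamma_m V(N_m)^{-1}$ is \emph{not} integral, because the lower-left entry of $\gamma_m$ is $\equiv 1\pmod{N_m^2}$ (it is the $S$-congruence, not the $I$-congruence, that governs it), so you cannot split $\eta_m^2$ as $\gamma_m\widetilde\gamma_m V(N_m^2)$ with $\widetilde\gamma_m\in\mathrm{SL}_2(\mathbb Z)$. The correct repair stays within your toolkit: since $d(\gamma_m)\equiv 0\pmod{N_m^2}$, one has $V(N_m)\gamma_m V(N_m)=N_m\delta$ with $\delta\in\mathrm{SL}_2(\mathbb Z)$, hence $\eta_m^2=N_m\,\gamma_m\delta$; the scalar acts trivially under the normalized slash, $\gamma_m\delta\in\Gamma_0(N)$, and evaluating $\chi_D$ on it (diagonal $\equiv -1$ modulo $N_m$, $\equiv N_m^{-1}$ modulo $N/N_m$) gives $\chi_m(-1)\chi_m'(N_m)$. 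Second, your (1) only treats independence of the choice of $\gamma_m$; the claim that $\eta_m$ \emph{defines an operator on} $A(N,k,\chi_D)$ also requires checking that for $M\in\Gamma_0(N)$ one has $\eta_m M\eta_m^{-1}\in\Gamma_0(N)$ with $\chi_D(\eta_m M\eta_m^{-1})=\chi_D(M)$; this holds here only because $\chi_D$ is quadratic (in general the $m$-part of the character gets conjugated), and that step should be spelled out.
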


From now on, we shall drop the weight in the notations of the operators if no confusion is possible. We construct the isomorphisms in the following definition.

\begin{Def}

Define a map $\phi: \mathcal A^\text{inv}(k,\rho_D)\rightarrow A^\epsilon(N,k,\chi_D)$ by
\[F\mapsto i^{\frac{r}{2}}2^{-\omega(N)}N^{-\frac{k-1}{2}}F_0|{W(N)}.\]
Conversely, we define $\psi: A^\epsilon(N,k,\chi_D)\rightarrow \mathcal A^\text{inv}(k,\rho_D)$ by
\[f\mapsto i^{\frac{r}{2}}N^{\frac{k-1}{2}}\sum_{M\in\Gamma_0(N)\backslash SL_2(\mathbb Z)}\left(f|W(N)|M\right)\rho_D(M^{-1})e_0.\]
\end{Def}

For each integer $n$ we define $s(n)=2^{\omega((n,N))}$; it depends on $N$. For example, if $N=12$, then $s(0)=s(6)=4$ and $s(2)=s(3)=2$.

\begin{Thm}
The maps $\phi$ and $\psi$ are inverse isomorphisms between $\mathcal A^\text{inv}(k,\rho_D)$ and  $A^\epsilon(N,k,\chi_D)$. Explicitly, if $f=\sum_na(n)q^n\in A^\epsilon(N,k,\chi_D)$ and $\psi(f)=F=\sum_\gamma F_\gamma e_\gamma$, then
\[F_\gamma(\tau)=s(Nq(\gamma))\sum_{n\equiv Nq(\gamma)\imod N\mathbb Z}a(n)q^{\frac{n}{N}}=\sum_{n\equiv Nq(\gamma)\imod N\mathbb Z}s(n)a(n)q^{\frac{n}{N}}.\]
\end{Thm}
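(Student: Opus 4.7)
The plan is to adapt the strategy of \cite{zhang2014isomorphism} to this more general setting. There are four verifications: that $\psi$ takes values in $\mathcal A^{\text{inv}}(k,\rho_D)$, the explicit Fourier formula for $\psi(f)$, that $\phi$ takes values in $A^{\epsilon}(N,k,\chi_D)$, and the mutual inversion. Independence of the sum defining $\psi(f)$ on the choice of coset representatives follows from the cancellation of $\chi_D(\gamma)$ and $\chi_D(\gamma^{-1})$ when $M$ is replaced by $\gamma M$ for $\gamma\in\Gamma_0(N)$, using that $\chi_D$ is quadratic and that $W(N)$ normalizes $\Gamma_0(N)$. The transformation law $\psi(f)|M_0=\rho_D(M_0)\psi(f)$ follows from reindexing $M\mapsto MM_0$ together with the identity $\rho_D(M^{-1})=\rho_D(M_0)\rho_D((MM_0)^{-1})$; invariance under $\text{Aut}(D)$ is automatic since every such automorphism fixes $e_0$ and commutes with $\rho_D$.

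The main computation is the Fourier formula for $F_\gamma:=[\psi(f)]_\gamma$. The $T$-intertwining forces $F_\gamma$ to be supported on exponents $n/N$ with $n\equiv Nq(\gamma)\pmod{N}$, and Lemma \ref{Invariance} together with $\text{Aut}(D)$-invariance reduces the problem to one representative per norm class. Choosing a standard system of coset representatives for $\Gamma_0(N)\backslash\text{SL}_2(\mathbb Z)$, one evaluates $[\rho_D(M^{-1})e_0]_\gamma$ via the explicit formulas for $\rho_D(T)$ and $\rho_D(S)$, so that the sum reduces to a Gauss-sum-type expression in the Fourier coefficients of $f$. The $\epsilon$-condition (the $U(N_p)\eta_p$-eigenvalue characterization of Lemma 2.5) at each prime $p\mid N$ then collapses contributions in pairs: at each prime $p\mid(Nq(\gamma),N)$, two otherwise independent coset contributions become equal when $\epsilon_p$ takes its prescribed value, which produces the doubling factor $s(Nq(\gamma))=2^{\omega((Nq(\gamma),N))}$ rather than a cancellation.

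For well-definedness of $\phi$, I would verify the $\epsilon$-condition on $\phi(F)$ by applying $U(N_p)\eta_p$ and checking the eigenvalue equation of Lemma 2.5; this is a purely local computation at each prime $p\mid N$ that reduces via Lemma \ref{Eta-operator} to the Jordan $p$-component $D_p$, and the specific $\epsilon_p$ is defined precisely so that this identity holds. Then $\phi\circ\psi=\text{id}$ follows from the $\gamma=0$ case of the Fourier formula, which yields $[\psi(f)]_0=2^{\omega(N)}\,f|U(N)$, combined with the global identity $(f|U(N))|W(N)=i^{-r/2}N^{(k-1)/2}f$, obtained by telescoping the local $\epsilon$-eigenvalue equations through parts (3) and (5) of Lemma \ref{Eta-operator}. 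Conversely, for $\psi\circ\phi=\text{id}$, it suffices to observe that the $\gamma=0$ component of $\psi(\phi(F))-F$ vanishes and to invoke the uniqueness Lemma \ref{Uniqueness}. The principal obstacle will be the Fourier computation described in the second paragraph, especially the case analysis for the $p=2$ Jordan components $2_t^{+2}$ and $2_{t_1}^{+1}\oplus 4_{t_2}^{\delta_2}$ that are new relative to \cite{zhang2014isomorphism}: it is there that the doubling factor $s(Nq(\gamma))$ and the prescribed sign $\epsilon_p$ must be verified to emerge with the correct local Gauss-sum constants.
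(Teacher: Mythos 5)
Your proposal is correct and follows essentially the same route as the paper: both reduce to the argument of \cite{zhang2014isomorphism}, using the uniqueness lemmas of Section 2, the $\eta$-operator identities of Lemma 3.1, and Scheithauer's explicit formulas for the Weil representation, with the genuinely new work concentrated in the case analysis at the $2$-adic Jordan components. The only (inessential) organizational differences are that the paper verifies the $\epsilon$-condition for $\phi(F)$ directly from which residues are locally represented by $q_p$ rather than via the $U(N_p)\eta_p$ eigenvalue equation, and proves $\phi\circ\psi=\mathrm{id}$ by grouping the coset sum according to the cusps $M\infty\sim\frac{1}{m}$ and evaluating $(F_s,e_0)|W(N)$ cusp by cusp, rather than by first establishing the full Fourier formula for every component $F_\gamma$.
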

\begin{proof}
Following the same lines as in \cite{zhang2014isomorphism}, we sketch the proof.

That $\psi$ is well-defined follows easily, of which the invariance follows from the fact that the actions of $\text{SL}_2(\mathbb Z)$ and $\text{Aut}(D)$ on $\mathbb C[D]$ commute. On the other hand, $\phi(F)$ belongs to $A(N,k,\chi)$ by Proposition 4.5 in \cite{scheithauer2009weil}. To see that $\phi(F)$ satisfies the $\epsilon$-condition, we observe that
\[F_0|W(N)=i^{\frac{r}{2}}N^{\frac{k-1}{2}}\sum_{\gamma\in D}F_\gamma(N\tau)=i^{\frac{r}{2}}N^{\frac{k-1}{2}}\sum_{n\in\mathbb Z}\left(\sum_{\gamma: q(\gamma)=\frac{n}{N}}a(\gamma,nN^{-1})\right)q^n:=\sum_{n\in\mathbb Z}a(n)q^n.\] Now it is easy to see that
$q$ represents $\frac{n}{N}$ if and only if $q_p$ represents $\frac{nN/N_p}{N_p}$ for each prime $p\mid N$. We may verify case by case that $a(n)=0$ if $\chi_p(n)=-\epsilon_p$ for some $p\mid N$. So $\phi$ is well-defined.

The same argument in Proposition 3.5 of \cite{zhang2014isomorphism} can be carried over to prove that $\psi\circ\phi=id$; note that we need Lemma 2.3. The proof of $\phi\circ\psi=id$ is similar to that of Proposition 3.15 in \cite{zhang2014isomorphism}, where explicit formulas in Theorem 4.7 of \cite{scheithauer2009weil} and Lemma 3.1 are needed.
In order to convince the reader that this is the case, we sketch the proof of $\phi\circ\psi=id$ when $D_2=2_{t_1}^{+1}\oplus 4_{t_2}^{\delta_2}$. We leave other cases to the reader.

For each cusp $s$, define
\[F_s=(-1)^{\frac{r}{2}}\sum_{M\in\Gamma_0(N)\backslash SL_2(\mathbb Z)\atop M\infty\sim s}\left(f|W(N)|M\right)\rho_D(M^{-1})e_0.\]
It suffices to prove that $\sum_s(F_s,e_0)|W(N)=2^{\omega(N)}f$. Denote by $m_1$ an odd positive divisor of $N$.

We first consider a cusp $s$ of the form $\frac{1}{N/m_1}$. Since the cosets in \[\{M\in\gamma_0(N)\backslash \text{SL}_2(\mathbb Z)\colon M\infty\sim s\}\] can be represented by $\{\gamma_{m_1}T^j\colon j\imod m_1\}$, we first note that
\[(F_s,e_0)|W(N)=\left((-1)^km_1^{1-\frac{k}{2}}f|W(N)\eta_{m_1}U(m_1)W(N)\right)(\rho_D(\gamma_{m_1}^{-1})e_0,e_0):=A\cdot B,\]
where $B=(\rho_D(\gamma_{m_1}^{-1})e_0,e_0)$ and $A$ is the product of other factors.
By repeatedly use of Lemma 3.1, one can show that
\[A=m_1^{\frac{1}{2}}\chi_{m_1}(-2)\prod_{p\mid m_1}\delta_p\varepsilon_p\cdot f.\]
By Theorem 4.7 in \cite{scheithauer2009weil} (note that his Weil representations are conjugate to ours), we see that
\[B=-m_1^{-\frac{1}{2}}\chi_2(-1)\chi_{m_1}(2)\left(\frac{-1}{t_1t_2}\right)\prod_{p\mid m_1}\delta_p\varepsilon_p.\]
Since $\prod_{p\mid m_1}\varepsilon_p^2=\chi_{m_1}(-1)$ and $\chi_2(-1)=-\left(\frac{-1}{t_1t_2}\right)$, we have $AB=f$. Similar computations show that $(F_s, e_0)|W(N)=f$ if $s$ is of the form $\frac{1}{8m_1}$.

For a cusp $s$ of the form $\frac{1}{2m_1}$ or $\frac{1}{4m_1}$. We claim that $(F_s,e_0)|W(N)=0$. To explain this, we employ the notations in \cite{scheithauer2009weil}. Let $M=\begin{pmatrix}a&b\\c&d\end{pmatrix}\in\text{SL}_2(\mathbb Z)$ such that $M\infty\sim s$. Then we have $2||c$ or $4||c$. Therefore, $x_c\neq 0$ and $0\not\in D^{c*}$, and hence $(F_s,e_0)|W(N)=0$.

Putting everything together, we see that $\sum_s(F_s,e_0)|W(N)=2^{\omega(N)}f$.
\end{proof}

We now investigate the behavior of a weakly holomorphic form that satisfies the $\epsilon$-condition at all cusps. 

\begin{Prop}\label{Fourier-Cusps}
Let $f=\sum_na(n)q^n\in A^\epsilon(N,k,\chi_D)$ and let $s$ be a cusp and $q_s$ be the local parameter at $s$. Fix any positive odd divisor $m_1$ of $N$. Then

(1) If $s\sim\frac{1}{m}$ with $m=m_1$ or $N_2m_1$, then the Fourier expansion of $f$ at $s$ contains precisely powers of the form $q_s^{n}$ with $a(nN/m)\neq 0$.

(2) If $4\mid N$ and $s\sim \frac{1}{2m_1}$, then the Fourier expansion of $f$ at $s$ contains at most powers of the form $q_s^{n/2}$ with $a(nN/2m_1)\neq 0$.

(3) If $8\mid N$ and $s\sim\frac{1}{4m_1}$, then the Fourier expansion of $f$ at $s$ contains at most powers of the form $q_s^{n/2}$ with $a(nN/4m_1)\neq 0$.
\end{Prop}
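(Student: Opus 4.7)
My plan is to treat the two ``Atkin--Lehner'' cusps in case~(1) via the operators $\eta_{N/m}$ and the $\epsilon$-condition, and to treat the ``middle'' $2$-cusps in cases (2)--(3) via the vector-valued description of $f$.

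For case~(1), the divisors $m=m_1$ and $m=N_2m_1$ are precisely those with $\gcd(m,N/m)=1$. The defining congruences on $\gamma_{N/m}$ force the lower-left entry $c$ to satisfy $\gcd(c,N)=m$, so $\gamma_{N/m}\infty\sim 1/m$ under $\Gamma_0(N)$; moreover the width at this cusp is $N/m$, and a direct computation shows $\chi_D(\gamma_{N/m}T^{N/m}\gamma_{N/m}^{-1})=1$, so the local parameter is the honest $q_s=e^{2\pi i\tau/(N/m)}$. Since $\eta_{N/m}=\gamma_{N/m}V(N/m)$, the Fourier expansion of $f|_k\eta_{N/m}$ at $\infty$ is exactly $(N/m)^{k/2}\sum_n b(n)q^n$, where the $b(n)$ are the Fourier coefficients of $f$ at $s$. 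Iterating Lemma 2.5 over every prime $p\mid N/m$ and using parts (3) and (5) of Lemma 3.1 to move the $U(N_p)$'s past the $\eta_p$'s, one obtains $f|_kU(N/m)\eta_{N/m}=C\,f$ for a nonzero constant $C$. Inverting the scalar $\eta_{N/m}^2$ provided by Lemma 3.1(4) gives $f|_k\eta_{N/m}=C'\cdot f|_kU(N/m)$ with $C'\neq 0$, and reading off Fourier coefficients yields $(N/m)^{k/2}b(n)=C'(N/m)^{k/2-1}a(nN/m)$, hence $b(n)\neq 0$ iff $a(nN/m)\neq 0$.

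For cases (2)--(3) the $\eta$-approach is unavailable: if $\eta_{m'}\infty\sim 1/(2m_1)$ or $\sim 1/(4m_1)$, the corresponding $N_{m'}$ would equal $N/(2m_1)$ or $N/(4m_1)$, neither of which is the $m'$-part of $N$ under the given constraints on $D_2$. I would first explain the half-integer exponents intrinsically: for $\sigma_s=\begin{pmatrix}1&0\\m&1\end{pmatrix}$ and $h$ the $\Gamma_0(N)$-width at $s$, a short calculation gives $mh\equiv N/2\imod N$, so the $(2,2)$-entry of $\sigma_sT^h\sigma_s^{-1}$ is $1+N/2$; checking the definition of $\chi_2$ from Section 2 in each subcase of $D_2$ yields $\chi_D(1+N/2)=-1$. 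Hence the effective period of $f$ at $s$ doubles to $2h$, so the Fourier expansion necessarily takes the form $\sum_n c(n)q_s^{n/2}$, accounting for the half-integer-exponent assertion. To pin down which $n$ may actually occur, I would combine the identity $f(\tau)=2^{-\omega(N)}\sum_{\gamma\in D}F_\gamma(N\tau)$ (a consequence of Theorem 3.3) with the factorization $V(N)\sigma_s=(\sigma'')^{-1}T'$ in $\text{GL}_2^+(\mathbb Q)$, where $(\sigma'')^{-1}=T^{N/m}S\in\text{SL}_2(\mathbb Z)$ and $T'=\begin{pmatrix}m&1\\0&N/m\end{pmatrix}$. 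Plugging in and using the explicit formulas for $\rho_D(T)^{N/m}\rho_D(S)$ re-expresses $f|_k\sigma_s$ as a linear combination $\sum_{\gamma'\in D}g(\gamma')F_{\gamma'}(T'\tau)$ with the $D$-theoretic Gauss sum $g(\gamma')=\sum_{\gamma\in D}e\!\left(\tfrac{N}{m}q(\gamma)-(\gamma,\gamma')\right)$.

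The main obstacle lies in completing this last step: showing that $g(\gamma')$ vanishes on every $\gamma'$ whose $F_{\gamma'}(T'\tau)$-contribution would force a nonzero $c(n)$ at a position with $a(nN/m)=0$, and verifying that the surviving terms reassemble into an expansion supported only on those $n$ with $a(nN/m)\neq 0$. This is a Gauss-sum calculation split along the Jordan blocks of $D$; in the $2$-part it further splits according to whether $D_2=2^{+2}_t$ or $D_2=2^{+1}_{t_1}\oplus 4^{\delta_2}_{t_2}$. The needed local identities are close cousins of those used in Scheithauer's Theorem 4.7 of \cite{scheithauer2009weil} and of the vanishing ``$x_c\neq 0$, hence $0\notin D^{c*}$'' invoked in the proof of Theorem 3.3, and I would adapt them to conclude.
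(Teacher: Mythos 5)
Your treatment of part (1) is correct and is essentially the paper's argument: both rest on $f|U(N/m)\eta_{N/m}=(*)f$ from Lemma 2.5 together with Lemma 3.1, giving $f|\gamma_{N/m}=(*)\,f|U(N/m)V(N/m)^{-1}$, and then reading off the expansion in $q_s=q^{m/N}$ using that the width at $\frac1m$ is $N/m$. Your observation that $\chi_D$ takes the value $-1$ on the generator of the stabilizer of $\frac{1}{2m_1}$ (resp. $\frac{1}{4m_1}$), which forces the half-integral exponents a priori, is a correct point that the paper does not make explicit.

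For parts (2) and (3), however, your argument stops exactly where the content of the proposition lies. Everything up to the expression $f|_k\sigma_s=(*)\sum_{\gamma'}g(\gamma')F_{\gamma'}(T'\tau)$ is bookkeeping; what must actually be proved is that the $\gamma'$ with $g(\gamma')\neq 0$ all satisfy $q(\gamma')\in\frac{1}{2m_1}\mathbb Z$, so that (via the explicit formula for $F_{\gamma'}$ in Theorem 3.3, which supports $F_{\gamma'}$ on $q^{n/N}$ with $n\equiv Nq(\gamma')\imod N$) only exponents $n$ with $\frac{N}{2m_1}\mid n$, i.e. with $a(nN/2m_1)\neq 0$, can occur. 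You flag precisely this Gauss-sum evaluation as ``the main obstacle'' and do not carry it out, so as written the proposal does not establish (2) or (3). The paper sidesteps the computation from the $T,S$ generators by choosing an explicit $\beta_{2m_1}\in\text{SL}_2(\mathbb Z)$ representing the cusp, factoring $W(N)\beta_{2m_1}$ as an integral matrix times an upper-triangular one, and citing Scheithauer's Theorem 4.7, which already asserts that $F_0|\beta$ is supported on the coset $D^{c'*}=\gamma_2+\gamma_2'+\oplus_{p\mid m_1}D_p$ with $c'=aN/2m_1$ --- exactly the set of $\gamma$ with $q(\gamma)\in\frac{1}{2m_1}\mathbb Z$. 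To close the gap you would either have to reprove that support statement by evaluating $g(\gamma')$ blockwise over the Jordan components (the $2$-adic block, in its two shapes $2^{+2}_t$ and $2^{+1}_{t_1}\oplus 4^{\delta_2}_{t_2}$, being the delicate one), or simply invoke Scheithauer's theorem as the paper does.
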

\begin{proof} 
We denote $f=(*)g$ if $f=cg$ for some $c\in\mathbb C^\times$.

For (1), let $m=m_1$ or $m=N_2m_1$ accordingly.
It is not hard to see that $\gamma_{N/m}\infty\sim s$, so it suffices to consider the Fourier expansion of $f|\gamma_{N/m}$. Since $f|U(N/m)\eta_{N/m}=(*)f$, we have
\[f|\gamma_{N/m}=f|\eta_{N/m}V(N/m)^{-1}=(*) f|U(N/m)V(N/m)^{-1}.\]
Since the width of the cusp $s$ is $N/m$, Part (1) follows.

For (2), let us deal with the case when $4||N$. Let $\beta_{2m_1}=\begin{pmatrix}
a&b\\
c&d
\end{pmatrix}\in\text{SL}_2(\mathbb Z)$ be a matrix that is congruent to
\[
\begin{pmatrix}
1&0\\
2&1
\end{pmatrix}\imod 4^2,\quad
\begin{pmatrix}
0&-1\\
1&0
\end{pmatrix}\imod (N/4m_1)^2,\quad \text{and}\quad
\begin{pmatrix}
1&0\\
0&1
\end{pmatrix}\imod m_1^2.
\]
Clearly, $\beta_{2m_1}\infty\sim s$. Let us pass to vector-valued modular forms. From the isomorphism $f\mapsto F$,  we see that
\[f|\beta_{2m_1}=(*)F_0|W(N)\beta_{2m_1}=(*)F_0\left|\begin{pmatrix}-c&-d\\Na&Nb\end{pmatrix}\right..\]
Since $(c,N)=2m_1$, we choose any integers $u,v$ such that
$\beta=\begin{pmatrix}-c/2m_1&u\\aN/2m_1&v\end{pmatrix}\in\text{SL}_2(\mathbb Z)$. We then have
\[f|\beta_{2m_1}=(*)F_0\left|\beta\begin{pmatrix}2m_1&-vd-uNb\\0&N/2m_1\end{pmatrix}\right..\]
By Theorem 4.7 in \cite{scheithauer2009weil}, we have
\[F_0|\beta=(*)\sum_{\gamma\in D^{c'*}}e(d'\gamma_{c'}^2/2)F_\gamma,\] here $c'=aN/2m_1$, $d'=v$, $D^{c'*}=\gamma_2+\gamma_2'+\oplus_{p\mid m_1}D_p$ with $\gamma_2,\gamma_2'$ the generators of $D_2$, and for the meaning of $\gamma_{c'}^2/2$ see \cite{scheithauer2009weil}. In particular, if $F_\gamma$ appears in $F_0|\beta$, we must have $q(\gamma)\in \frac{1}{2m_1}\mathbb Z$. Moreover, from the isomorphism, we see that such $F_\gamma$ contains only $q^{\frac{n}{N}}$ with $a(n)\neq 0$ and $\frac{N}{2m_1}\mid n$. Therefore, since the width at $s$ is $N/4m_1$, we have $f|\beta_{2m_1}$ contains at most terms of the form $q_s^{n/2}$ with $a(nN/2m_1)\neq 0$.

Part (3) and the case when $8\mid N$ for Part (2) follow in the same way, and we omit the details.
\end{proof}

\begin{Rmk} Part (1) of Proposition 3.4 can be easily made precise using the $\epsilon$-condition.
We can also made Part (2) precise using the argument in the proof of Corollary 3.13 in \cite{zhang2014isomorphism}. For example, if $4||N$ and we choose for any odd $m_1\mid N$
\[\alpha_{2m_1}=\eta_{2m_1}^{-1}\begin{pmatrix}
1 & -1/2\\
0 &1
\end{pmatrix}\eta_{2m_1}\eta_{N/4m_1},\]
then $\alpha_{2m_1}V(N/4m_1)^{-1}\in\text{SL}_2(\mathbb Z)$ and it sends $\infty$ to the cusp $\frac{1}{2m_1}$. Our $\alpha_{2m_1}$ here  differs from the one used in \cite{zhang2014isomorphism} by $V(N/4m_1)$. Messy but elementary computations give us that
\begin{align*}f|\alpha_{2m_1}&=-2^{1-\frac{k}{2}}(N/4m_1)^{\frac{1-k}{2}}\chi'_{2}(2)\chi_{m_1}(N/4m_1)\prod_{p\neq p'\mid \frac{N}{4m_1}}\chi_p(p')\prod_{p\mid \frac{N}{m_1}}\epsilon_p\varepsilon_p^{-1}\\
&\hspace{2cm}\times \left(\sum_n\chi_2(n)a(2n)q^n\right)\left|U(N/4m_1)
\begin{pmatrix}
1& 0\\
0& 2
\end{pmatrix}\right..\end{align*}

\end{Rmk}

\begin{Cor}\label{Holomorphy}
Let $f\in A^\epsilon(N,k,\chi_D)$.

(1) If $f$ is holomorphic (or vanishes, respectively) at $\infty$, then $f\in M^\epsilon(N,k,\chi_D)$ (or $S^\epsilon(N,k,\chi_D)$, respectively).

(2) If $f=q^{-d}+O(1)$ with $d$ a positive integer coprime to $N$, then $f$ is holomorphic at cusps other than $\infty$.

(3) If $k\leq 0$ and $f,g\in A^\epsilon(N,k,\chi_D)$ have the same principal part at $\infty$, then $f=g$.
\end{Cor}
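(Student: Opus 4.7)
The plan is to derive all three parts directly from Proposition 3.4.

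For Part (1), assume $a(n) = 0$ for all $n < 0$. Every cusp of $\Gamma_0(N)$ is equivalent to some $1/m$ with $m \mid N$, and $m = N$ corresponds to $\infty$; so it suffices to check holomorphy at $s \sim 1/m$ for $m \mid N$ with $m \neq N$. Proposition 3.4 writes the Fourier expansion of $f$ at such an $s$ as a sum of powers $q_s^{n}$ or $q_s^{n/2}$ whose coefficients are proportional to $a(nN/m)$, $a(nN/(2m_1))$, or $a(nN/(4m_1))$, depending on the cusp type. A polar term would require a negative integer $n$, but then the argument of $a$ is also negative, hence the coefficient vanishes by hypothesis. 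The same argument with $a(0) = 0$ adjoined yields the vanishing statement.

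For Part (2), the only nonzero Fourier coefficient of $f$ at a negative index is $a(-d)$. A pole at a cusp $s \sim 1/m$ with $m < N$ would, by Proposition 3.4, force one of $nN/m = -d$, $nN/(2m_1) = -d$, or $nN/(4m_1) = -d$ for some negative integer $n$. Clearing denominators and using $(d,N) = 1$ converts each into a divisibility constraint on the odd parameter $m_1$ (or on $m$ in Case (1)). A short case check handles all three cusp families: Case (1) forces $m = N$, hence $s \sim \infty$; Case (2) yields $v_2(N) \leq 1$, contradicting the hypothesis $4 \mid N$; Case (3) yields $v_2(N) \leq 2$, contradicting $8 \mid N$. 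Hence no pole occurs at any cusp other than $\infty$.

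For Part (3), the difference $h := f - g$ lies in $A^\epsilon(N, k, \chi_D)$ with vanishing principal part at $\infty$, so Part (1) places it in $M^\epsilon(N, k, \chi_D)$. Since $\chi_D$ is a primitive Dirichlet character of modulus $N$, the ambient space $M(N, k, \chi_D)$ is zero for $k < 0$, and also for $k = 0$ whenever $\chi_D$ is non-trivial (i.e.\ whenever $N > 1$); in either case $h = 0$.

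The main obstacle is the bookkeeping in Part (2): correctly coordinating the three cusp families of Proposition 3.4 with their denominator factors $1$, $2m_1$, and $4m_1$, and translating each resulting divisibility condition into a clean $2$-adic contradiction against the hypotheses $4 \mid N$ or $8 \mid N$. Parts (1) and (3) are then essentially immediate consequences.
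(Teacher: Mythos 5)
Your proposal is correct and follows essentially the same route as the paper: parts (1) and (2) are read off from Proposition 3.4 via the divisibility constraints forced by $(d,N)=1$ (the paper phrases the contradiction as $N/m \mid d$ being absurd for $m<N$, with the $2m_1$ and $4m_1$ cusp families handled ``similarly,'' which is exactly your $2$-adic bookkeeping), and part (3) reduces to Part (1) together with the vanishing of holomorphic forms of weight $k\leq 0$. The paper additionally sketches an alternative proof of (3) via the isomorphism $\psi$ and Lemma 2.4, but that is optional; your explicit remark on the non-triviality of $\chi_D$ when $k=0$ only makes the standard argument slightly more careful than the paper's.
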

\begin{proof}
The first part follows trivially from Proposition 3.4. The second part also follows from this proposition; actually, if $s\sim\frac{1}{m_1}$ with $m_1$ odd and $q_s^n$ with $n<0$ appears, then we must have $N/m\mid d$ which is absurd since $m<N$ and $(d,N)=1$, and the other cases follows similarly.

For (3), we note that $f-g\in A^\epsilon(N,k,\chi)$ is holomorphic at $\infty$, hence holomorphic at all cusps by Part (1). But $k\leq 0$, hence $f-g=0$. Alternatively, we may derive this directly from the isomorphism. Indeed, if $F=\psi(f-g)$, then $F_\gamma$ is holomorphic from the isomorphism and in particular $F_0$ is holomorphic, hence $0$. So $F=0$ by Lemma 2.4 and hence $f-g=0$.
\end{proof}

\section{Obstructions and Rationality of Fourier Coefficients}

\noindent
In this section, we translate Borcherds's theorem of obstructions to scalar-valued modular forms using the isomorphism in the previous section. In other words, we investigate the existence of weakly holomorphic modular forms with prescribed principal parts. At the end of this section, we shall also mention the rationality of Fourier coefficients of modular forms that satisfy some $\epsilon'$-condition. The arguments are essentially the same at that in Section 4 of \cite{zhang2014isomorphism}, so we will only mention some differences.

We shall vary $D$ by choosing different data for $\delta_p$ or $t,t_1,t_2$, and hence vary $\epsilon$ and other data, from now on.

Let $m$ be a positive integer. Recall that if $F=\sum_\gamma F_\gamma e_\gamma$ and $G=\sum_\gamma G_\gamma e_\gamma$ with $F_\gamma,G_\gamma\in\mathbb C(\!(q_m)\!)$, the field of Laurent series in $q_m=q^{\frac{1}{m}}$, we have the following pairing:
\[\langle F,G\rangle=\text{ the constant term of }\sum_\gamma F_\gamma G_\gamma.\] The following proposition holds for each $D$ and its Weil representation.
 This is Corollary 4.3 in \cite{zhang2014isomorphism}, which follows easily from Borcherds's Theorem 3.1 in \cite{borcherds1999gross}.

\begin{Prop}\label{Obstruction-Invariant}
Let $P$ be a $\mathbb C[D]$-valued polynomial in $q^{-1}_N$ that is invariant under $\text{Aut}(D)$. Then there exists $F\in\mathcal A^{\text{inv}}(k,\rho_D)$ such that $F-P$ vanishes at $q_N=0$,  if and only if $\langle P, G\rangle=0$ for each $G\in\mathcal M^\text{inv}(2-k,\rho_D^*)$.
\end{Prop}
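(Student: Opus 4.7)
The plan is to reduce this statement to Borcherds's unrestricted obstruction theorem (Theorem 3.1 of \cite{borcherds1999gross}), which asserts the same equivalence with $\mathcal A^{\text{inv}}$ and $\mathcal M^{\text{inv}}$ replaced by $\mathcal A$ and $\mathcal M$, by a standard averaging argument over the finite group $\text{Aut}(D)$. Two structural facts are the backbone. First, $\text{Aut}(D)$ commutes with $\rho_D$ (and with $\rho_D^*$) on $\mathbb C[D]$, as noted in Section 1; consequently it acts on each of $\mathcal A(k,\rho_D)$ and $\mathcal M(2-k,\rho_D^*)$, and the invariant subspaces in the statement are exactly its fixed subspaces. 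Second, the pairing $\langle\cdot,\cdot\rangle$ is diagonally $\text{Aut}(D)$-equivariant, $\langle\sigma F,\sigma G\rangle=\langle F,G\rangle$, because each $\sigma$ merely permutes the index set $D$ in the sum defining the pairing. I will write $\mathrm{av}(X)=|\text{Aut}(D)|^{-1}\sum_{\sigma}\sigma X$ for the projector onto invariants.

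The direction ($\Rightarrow$) will be immediate: any $F\in\mathcal A^{\text{inv}}(k,\rho_D)$ realizing the principal part $P$ lies a fortiori in $\mathcal A(k,\rho_D)$, so Borcherds's theorem forces $\langle P,G\rangle=0$ for every $G\in\mathcal M(2-k,\rho_D^*)$, in particular for invariant $G$. For the converse ($\Leftarrow$), I will assume the vanishing for invariant $G$ and take an arbitrary $G\in\mathcal M(2-k,\rho_D^*)$. Using invariance of $P$ and equivariance of the pairing,
\[\langle P,\sigma G\rangle=\langle\sigma P,\sigma G\rangle=\langle P,G\rangle\quad\text{for every }\sigma\in\text{Aut}(D),\]
so averaging over $\sigma$ yields $\langle P,G\rangle=\langle P,\mathrm{av}(G)\rangle=0$ by hypothesis. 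Thus the obstructions against \emph{all} $G$ vanish, and Borcherds's theorem produces some $F_0\in\mathcal A(k,\rho_D)$ with $F_0-P$ vanishing at $q_N=0$. Setting $F:=\mathrm{av}(F_0)\in\mathcal A^{\text{inv}}(k,\rho_D)$, the principal part of $F$ equals $\mathrm{av}(P)=P$, since $P$ is already invariant, which settles existence.

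The hard part, such as it is, is bookkeeping rather than substance: I need to verify that $\text{Aut}(D)$ genuinely preserves the slash action, the holomorphy on $\mathbb H$, and the Fourier expansion at $\infty$ (routine, since $\sigma$ simply permutes Fourier components of $F$ and commutes with $|_kM$), that $\mathrm{av}$ sends $\mathcal A(k,\rho_D)$ into $\mathcal A^{\text{inv}}(k,\rho_D)$ (a tautology once one knows the action is well-defined), and that $\mathrm{av}$ commutes with extraction of the principal part (trivial, being componentwise on Laurent series in $q_N$). None of these is a genuine obstacle, which is precisely why the author quotes the result as essentially immediate from Borcherds's theorem.
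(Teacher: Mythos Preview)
Your proof is correct and is precisely the standard averaging argument that makes the proposition ``follow easily from Borcherds's Theorem 3.1,'' as the paper puts it (the paper gives no proof beyond citing Corollary~4.3 of \cite{zhang2014isomorphism} and Borcherds). Your bookkeeping checks are accurate, and the key step---using the $\text{Aut}(D)$-invariance of $P$ together with the diagonal equivariance of the pairing to pass from obstructions against invariant $G$ to obstructions against all $G$---is exactly what one expects.
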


We now fix $N$ such that $N_p=p$ if $p\mid N$ is odd and $N_2=1,4$ or $8$. Define $\chi_p=\left(\frac{\cdot}{p}\right)$ if $p\mid N$ is odd, and define $\chi_2$ to be $1$ if $2\nmid N$, $\left(\frac{-4}{\cdot}\right)$ if $2^2||N$, $\left(\frac{2}{\cdot}\right)$ if $N\equiv 8\imod 32$, and $\left(\frac{-2}{\cdot}\right)$ if $N\equiv 24\imod 32$. We define $\chi=\prod_p\chi_p$. In any case, $\chi$ is a primitive character modulo $N$.

\begin{Rmk}
By varying our discriminant form $D=\oplus_pD_p$, the isomorphism actually covers $A^\epsilon(N,k,\chi)$ for all $\epsilon$. More explicitly, for any sign vector $\epsilon$,
\begin{itemize}
\item if $2\nmid N$, then we choose $D_p=p^{\delta_p}$ with $\delta_p=\chi_p(2N/p)\epsilon_p$;
\item if $2^2||N$, then we choose the same $D_p$ for odd $p$ and choose $D_2=2_t^{+2}$ with $t\in\{\pm 2\}$ determined by $t=2\chi_2(N/4)\epsilon_2$;
\item if $2^3|N$, then we choose the same $D_p$ for odd $p$ and choose $D_2=2_{t_1}^{+1}\oplus 4^{\delta_2}_{t_2}$ with $t_2\in\{\pm 1,\pm 3\}$ determined by $\chi_2(t_2)=\chi_2(N/8)\epsilon_2$ and $t_1\in\{\pm 1\}$ determined by $\left(\frac{-4}{t_1t_2}\right)=\chi_2(3)$. Note that we have two possible $D_2$'s in this case, but they are isomorphic.
\end{itemize}
For example, here we cover the cases when $N=15$ and $N=20$ for all possible $\epsilon$. Note that the case when $N$ is an odd prime is already contained in \cite{bruinier2003borcherds}.
\end{Rmk}

Let us ssume that $k\leq 0$ and hence $2-k\geq 2$.
Let us denote by $E(N,2-k,\chi)$ the space of Eisenstein series of level $N$, weight $2-k$ and character $\chi$. It is well-known that $\text{dim}(E(N,2-k,\chi))=2^{\omega(N)}$, with a basis concretely given by $\{E_m: m\mid N, m=N_m\}$ where (see Theorem 4.5.2 and Theorem 4.6.2 in \cite{diamond2005first}) :
\[
E_m= \delta_{1,m}L\left(k-1,\chi\right)+2\sum_{n=1}^\infty\left(\sum_{d\mid n}\chi_m(n/d)\chi_m'(d)d^{1-k}\right)q^n.
\]
For each $\epsilon$ and $m\mid N$, we denote $\epsilon_m=\prod_{p\mid m}\epsilon_p$.
Define \[E^{\epsilon}=\frac{1}{s(0)L(k-1,\chi)}\sum_{m\mid N}\epsilon_m E_m,\] and assume $E^\epsilon=\sum_{n}B(n)q^n$. We see that $B(0)=s(0)^{-1}$ and such normalization is different from \cite{bruinier2003borcherds} or \cite{zhang2014isomorphism}.

\begin{Lem}\label{Eisenstein}
For each sign vector $\epsilon$, we have $E^{\epsilon}(N,2-k,\chi)=\text{span}_\mathbb{C}\{E^{\epsilon}\}$.
\end{Lem}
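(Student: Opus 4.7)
The plan is to verify that $E^\epsilon$ is a nonzero element of $E^\epsilon(N,2-k,\chi)$ and then conclude by a dimension count based on the direct-sum decomposition $A(N,2-k,\chi)=\oplus_{\epsilon'}A^{\epsilon'}(N,2-k,\chi)$ recalled in Section~2.

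Since $E^\epsilon$ is by construction a $\mathbb{C}$-linear combination of the Eisenstein basis $\{E_m\}$, it already lies in $E(N,2-k,\chi)$. Its constant term is $B(0)=\epsilon_1 s(0)^{-1}=s(0)^{-1}\neq 0$ (only the $m=1$ summand contributes via its $L(k-1,\chi)$-term), so $E^\epsilon\neq 0$. To check the $\epsilon$-condition I would substitute the explicit Fourier expansion of each $E_m$ and swap the order of summation, obtaining for $n\geq 1$
\[
B(n)=\frac{2}{s(0)L(k-1,\chi)}\sum_{d\mid n}d^{1-k}\sum_{m\mid N,\,m=N_m}\epsilon_m\,\chi_m(n/d)\chi_m'(d).
\]
Indexing $m=\prod_{p\in S}N_p$ by subsets $S\subseteq\{p:p\mid N\}$, the inner sum factors as the Euler product $\prod_{p\mid N}\bigl(\epsilon_p\chi_p(n/d)+\chi_p(d)\bigr)$. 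If $\chi_p(n)=-\epsilon_p$ for some $p\mid N$, then $p\nmid n$, hence $p\nmid d$ and $p\nmid (n/d)$ for every $d\mid n$; since $\chi_p$ is quadratic we have $\chi_p(n/d)=\chi_p(n)\chi_p(d)$, so the $p$-th factor becomes $\chi_p(d)\bigl(1+\epsilon_p\chi_p(n)\bigr)=0$, forcing $B(n)=0$. Therefore $E^\epsilon$ satisfies the $\epsilon$-condition and belongs to $E^\epsilon(N,2-k,\chi)$.

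For the reverse inclusion, as $\epsilon$ ranges over the $2^{\omega(N)}$ sign vectors the elements $E^\epsilon$ lie in pairwise different summands of $\oplus_{\epsilon'}A^{\epsilon'}$ and are all nonzero, so the family $\{E^\epsilon\}_\epsilon$ is linearly independent in $E(N,2-k,\chi)$; matching $\dim E(N,2-k,\chi)=2^{\omega(N)}$, it is a basis. Given any $f\in E^\epsilon(N,2-k,\chi)\subseteq E(N,2-k,\chi)$, expand $f=\sum_{\epsilon'}c_{\epsilon'}E^{\epsilon'}$; combining $f\in A^\epsilon$, $E^{\epsilon'}\in A^{\epsilon'}$ with the directness of the decomposition forces $c_{\epsilon'}=0$ for $\epsilon'\neq\epsilon$, so $f\in\text{span}_{\mathbb C}\{E^\epsilon\}$. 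The main technical point is the combinatorial factorization of the inner sum over $m$ into a product over primes $p\mid N$ and the observation that the quadraticity of $\chi_p$ kills the $p$-th factor precisely when the $\epsilon$-condition at $p$ is violated; everything else is bookkeeping with the direct-sum structure from Section~2.
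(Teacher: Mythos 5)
Your proof is correct. The heart of the argument --- verifying that $E^\epsilon$ satisfies the $\epsilon$-condition --- is the same computation as the paper's, just organized differently: the paper pairs the summands indexed by $m$ and $pm$ for $m\mid N/N_p$ and cancels them using $\epsilon_p\chi_{pm}(n/d)\chi_{pm}'(d)=\epsilon_p\chi_p(n)\chi_m(n/d)\chi_m'(d)$, while you factor the entire inner sum over $m$ as the Euler product $\prod_{p\mid N}\bigl(\epsilon_p\chi_p(n/d)+\chi_p(d)\bigr)$ and observe that the $p$-th factor vanishes when $\chi_p(n)=-\epsilon_p$. These are the same cancellation; your packaging is arguably cleaner. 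Where you genuinely diverge is in establishing that $E^\epsilon$ spans: the paper simply cites Lemma 4.4 of \cite{zhang2014isomorphism} for $\dim E^{\epsilon}(N,2-k,\chi)=1$, whereas you rederive this from scratch by noting that the $2^{\omega(N)}$ forms $E^{\epsilon'}$ are nonzero, lie in distinct summands of the decomposition $A(N,2-k,\chi)=\oplus_{\epsilon'}A^{\epsilon'}(N,2-k,\chi)$, hence are linearly independent, and therefore form a basis of the $2^{\omega(N)}$-dimensional Eisenstein space. This makes the proof self-contained at the cost of a little extra bookkeeping, and in effect reproves the cited dimension lemma for the Eisenstein subspace rather than invoking it. Both routes are valid; yours buys independence from the external reference, the paper's buys brevity.
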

\begin{proof}
We first note that each $E^\epsilon(N,2-k,\chi)$ has dimension $1$ by Lemma 4.4 of \cite{zhang2014isomorphism}. It suffices to show that $E^\epsilon$ or $\sum_m\epsilon_m E_m$ satisfies the $\epsilon$-condition.

Let $n$ be any positive integer such that for some $p\mid N$ we have $\chi_p(n)=-\epsilon_p$. We need to show that the $q^n$ coefficient of $\sum_m\epsilon_mE_m$ is $0$. Indeed, such a coefficient is
\[2\sum_{m\mid N}\epsilon_m\sum_{d\mid n}\chi_m(n/d)\chi_m'(d)d^{1-k}=2\sum_{m\mid\frac{N}{N_p}}\epsilon_m\sum_{d\mid n}d^{1-k}\left(\epsilon_p\chi_{pm}(n/d)\chi_{pm}'(d)+\chi_m(n/d)\chi_m'(d)\right)=0,\]
since $\epsilon_p\chi_{pm}(n/d)\chi_{pm}'(d)=\epsilon_p\chi_{p}(n)\chi_{m}(n/d)\chi_{m}'(d)$.
\end{proof}

For an integer $m$, we say that $m$ is an $\epsilon$-integer if $\chi_p(m)\neq -\epsilon_p$ for each $p\mid N$. For any $P=\sum_{n}a(n)q^n\in\mathbb C[q^{-1}]$, we say $P$ is an $\epsilon$-polynomial in $q^{-1}$ if $n$ is an $\epsilon$-integer whenever $a(n)\neq 0$.

We state the obstruction theorem for scalar-valued modular forms. (See Theorem 6 in \cite{bruinier2003borcherds} in the case of prime level.) Here we remark that the case $k\geq 2$ is trivial, since $2-k\leq 0$ and the obstruction space is trivial.

\begin{Thm}\label{Obstruction-Scalar}
Let $k\leq 0$ and $\epsilon$ be a sign vector. Let $P=\sum_{n<0}a(n)q^n$ be an $\epsilon$-polynomial in $q^{-1}$. Then there exists $f\in A^\epsilon(N,k,\chi)$ with $f=\sum_{n\in\mathbb Z}a(n)q^n$, if and only if
\[\sum_{n<0}s(n)a(n)b(-n)=0,\]
for each $g=\sum_{n\geq 0}b(n)q^n\in S^{\epsilon^*}(N,2-k,\chi)$. If $f$ exists, it is unique and its constant term is given by
\[a(0)=-\sum_{n< 0}s(n)a(n)B(-n).\]
\end{Thm}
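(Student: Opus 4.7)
The plan is to push the statement through the isomorphism of Theorem 3.3 to the vector-valued obstruction in Proposition 4.1. First, Remark 4.2 lets me realize the given sign vector $\epsilon$ as the one attached to an explicit discriminant form $D$, so Theorem 3.3 identifies $A^\epsilon(N,k,\chi)$ with $\mathcal A^\text{inv}(k,\rho_D)$. For $P=\sum_{n<0}a(n)q^n$ and $c\in\mathbb C$, the explicit formula in Theorem 3.3 prescribes the Aut-invariant $\mathbb C[D]$-valued polynomial
\[
\tilde P\;=\;\sum_{\gamma\in D}\Bigl(\sum_{n<0,\;n\equiv Nq(\gamma)\imod N}s(n)a(n)q_N^n\Bigr)e_\gamma,\qquad \tilde 1\;:=\;s(0)e_0,
\]
where $\tilde 1$ is the image of the scalar constant $1$ and uses the elementary fact $r_D(0)=1$ (with $r_D(n):=\#\{\gamma\in D\mid Nq(\gamma)\equiv n\imod N\}$, verified case-by-case on the admissible Jordan components), Aut-invariance being Lemma 2.1. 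The existence of $f\in A^\epsilon(N,k,\chi)$ with principal part $P$ and constant term $c$ is equivalent under $\psi$ to the existence of $F\in\mathcal A^\text{inv}(k,\rho_D)$ satisfying $F-\tilde P-c\tilde 1=O(q_N)$, and by Proposition 4.1 this holds iff $\langle\tilde P+c\tilde 1,G\rangle=0$ for every $G\in\mathcal M^\text{inv}(2-k,\rho_{D^*})$.

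Next, I split $\mathcal M^\text{inv}(2-k,\rho_{D^*})$ into its cuspidal part and its one-dimensional Eisenstein line. The cuspidal part corresponds via Theorem 3.3 applied to $D^*$ to $S^{\epsilon^*}(N,2-k,\chi)$, while the Eisenstein line corresponds, via Lemma 4.3, to the Eisenstein series singled out there. For $G$ corresponding to $g=\sum_{n\geq 1}b(n)q^n\in S^{\epsilon^*}(N,2-k,\chi)$, the pairing $\langle\tilde 1,G\rangle$ vanishes since $g$ has no constant term, and unfolding the Theorem 3.3 expansions (with $q^*=-q$) gives
\[
\langle\tilde P,G\rangle\;=\;\sum_{n<0}s(n)^2\,r_D(n)\,a(n)b(-n).
\]
The decisive arithmetic input is the identity $s(n)\,r_D(n)=2^{\omega(N)}$ whenever $n$ is an $\epsilon$-integer. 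This factorizes multiplicatively over the Jordan $p$-components and is verified case-by-case using Remark 4.2 to translate between $\delta_p,t,t_1,t_2$ and the $\epsilon_p$'s; it is the same counting that underpins Theorem 3.3. After substitution the cuspidal obstruction collapses to exactly $\sum_{n<0}s(n)a(n)b(-n)=0$.

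For the Eisenstein direction only one linear equation in $c$ remains. The normalization $B(0)=s(0)^{-1}$ from Lemma 4.3 combined with $r_D(0)=1$ gives $\langle\tilde 1,G_E\rangle=2^{\omega(N)}$, and the same arithmetic identity gives $\langle\tilde P,G_E\rangle=2^{\omega(N)}\sum_{n<0}s(n)a(n)B(-n)$, so the unique solution is $c=-\sum_{n<0}s(n)a(n)B(-n)$. Since $\tilde 1=s(0)e_0$ is precisely the image under $\psi$ of the scalar constant $1$, this $c$ is the constant term $a(0)$ of $f$, yielding the claimed formula. Uniqueness of $f$ then follows from Corollary 3.6(3): if $k\leq 0$, any two candidates differ by an element of $A^\epsilon(N,k,\chi)$ with vanishing principal part, forcing it to be zero.

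The principal technical obstacle is the case-by-case verification of $s(n)\,r_D(n)=2^{\omega(N)}$ on the support of the $\epsilon$-condition; this runs over each admissible Jordan component listed in Section 2 and is the arithmetic heart of the argument, mirroring the calculation already required for the isomorphism in Theorem 3.3. Once it is in hand, the remainder is a formal transport of Proposition 4.1 through the isomorphisms $\psi$ and $\psi^*$, in the same spirit as Section 4 of \cite{zhang2014isomorphism}.
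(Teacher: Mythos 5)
Your proposal is correct and follows essentially the same route as the paper: transport the statement through the isomorphism of Theorem 3.3 (with the discriminant form supplied by Remark 4.2) to Borcherds's obstruction in Proposition 4.1, split the obstruction space into cusp forms and the Eisenstein line of Lemma 4.3, and use the counting identity $s(n)r_D(n)=2^{\omega(N)}$ (implicit in the proof of Theorem 3.3) to collapse the pairing, with uniqueness from Corollary 3.6(3). The paper merely cites these ingredients and refers to \cite{zhang2014isomorphism} for details, so your write-up is a faithful, more explicit version of the intended argument.
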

\begin{proof} The statements follow from Proposition 4.1, Remark 4.2 and Lemma 4.3. For details, please see the proof of Theorem 4.5 in \cite{zhang2014isomorphism}.
\end{proof}

For completeness, we reproduce a couple of results in \cite{zhang2014isomorphism} following the lines in \cite{bruinier2003borcherds}. These results concern the rationality of the Fourier coefficients, which is important in Borcherds's theory of automorphic products. For $f=\sum_n a(n)q^n$ and $\sigma\in\text{Gal}(\mathbb C/\mathbb Q)$, define $f^\sigma=\sum_n a(n)^\sigma q^n$. Let $k$ be an integer, possibly positive.

\begin{Lem}\label{Galois}
If $f\in A(N,k,\chi)$, so is $f^\sigma$.
\end{Lem}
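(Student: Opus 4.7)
The plan is to reduce to the holomorphic case and exploit the rationality of a basis for holomorphic modular forms with nebentypus $\chi$. The first observation is that $\chi=\prod_p\chi_p$ in our setting is a product of (Kronecker-type) quadratic characters, so it takes values in $\{0,\pm 1\}\subset\mathbb{Q}$. Hence $\chi^\sigma=\chi$ for every $\sigma\in\mathrm{Gal}(\mathbb{C}/\mathbb{Q})$, and the statement $f^\sigma\in A(N,k,\chi)$ even makes sense as stated.

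Given $f\in A(N,k,\chi)$, choose an integer $m>0$ large enough that $F:=\Delta^m f$ is holomorphic at every cusp of $\Gamma_0(N)$; since $\Delta$ vanishes only at $\infty$ and to arbitrarily high order there (and is fixed by $\Gamma_0(N)$ with trivial character), we have $F\in M(N,k+12m,\chi)$. Now invoke Shimura's rationality theorem: the space $M(N,\ell,\chi)$ admits a basis $\{g_1,\dots,g_d\}$ whose Fourier expansions at $\infty$ have coefficients in $\mathbb{Q}(\chi)=\mathbb{Q}$. (Concretely, this follows from the rationality of a basis of Eisenstein series together with the fact that $S(N,\ell,\chi)$ has a $\mathbb{Q}$-rational basis because the Hecke algebra acts on a $\mathbb{Q}$-structure via integral Fourier coefficients.)

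Writing $F=\sum_i c_i g_i$ with $c_i\in\mathbb{C}$, apply $\sigma$ coefficient-wise to the $q$-expansion at $\infty$ to obtain
\[
F^\sigma=\sum_i c_i^\sigma g_i.
\]
The right-hand side is a $\mathbb{C}$-linear combination of elements of $M(N,k+12m,\chi)$, hence belongs to that space. Because $\Delta$ has integer Fourier coefficients, $\Delta^\sigma=\Delta$, so $F^\sigma=\Delta^m f^\sigma$. Dividing gives
\[
f^\sigma=\frac{F^\sigma}{\Delta^m}\in A(N,k,\chi),
\]
since dividing a holomorphic modular form of the right level/weight/character by $\Delta^m$ yields a weakly holomorphic form with the same transformation behavior.

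The only genuine input beyond routine bookkeeping is Shimura's theorem supplying the $\mathbb{Q}$-rational basis of $M(N,k+12m,\chi)$; this is the step where one must be careful, but it is standard and, for our quadratic $\chi$, unconditional. Everything else—the choice of $m$, the preservation of the transformation law under coefficient-wise $\sigma$ (which commutes with $T$-translation and hence with the $q$-expansion principle), and the compatibility with the slash action through $\Delta$—is formal.
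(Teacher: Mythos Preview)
Your argument is correct and is the standard route; the paper itself gives no proof here, simply citing Lemma~4.6 of \cite{zhang2014isomorphism}, and the very next proof (of Proposition~4.6) uses exactly your $\Delta$-multiplication trick, so your approach aligns with the authors' toolkit.

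One small correction to your parenthetical justification: it is not that ``$\Delta$ vanishes only at $\infty$ and to arbitrarily high order there.'' On $X_0(N)$ the form $\Delta$, being $\mathrm{SL}_2(\mathbb Z)$-modular, vanishes at \emph{every} cusp (each to order exactly $1$ in the local parameter), and this is precisely what you need, since $f$ may have poles at cusps other than $\infty$. With that fixed, the choice of $m$ large enough to kill all poles of $f$ at all cusps gives $F=\Delta^m f\in M(N,k+12m,\chi)$, and the rest of your reduction via a $\mathbb Q$-rational basis goes through unchanged.
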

\begin{proof}
This is Lemma 4.6 in \cite{zhang2014isomorphism}.
\end{proof}

The following generalizes Proposition 4.7 in \cite{zhang2014isomorphism} to our present more general setting.

\begin{Prop}
Let $k\leq 0$ be an integer and fix any sign vector $\epsilon$. Let $f=\sum_na(n)q^n\in A^\epsilon(N,k,\chi)$ and suppose that $a(n)\in\mathbb Q$ for $n<0$. Then all coefficients $a(n)$ are rational with bounded denominator.
\end{Prop}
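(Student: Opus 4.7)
The proof splits into two stages: first, rationality via a Galois-descent argument using Lemma 4.6 and the uniqueness statement of Corollary 3.6(3); second, the bounded-denominator claim via the explicit constant-term formula of Theorem 4.4 together with a finite-dimensionality argument for admissible principal parts of fixed denominator.

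For rationality, fix an arbitrary $\sigma\in\text{Gal}(\mathbb C/\mathbb Q)$ and consider $f^\sigma=\sum_n a(n)^\sigma q^n$. By Lemma 4.6, $f^\sigma\in A(N,k,\chi)$. The $\epsilon$-condition is defined by the vanishing of certain Fourier coefficients, a condition manifestly preserved by Galois action, so $f^\sigma\in A^\epsilon(N,k,\chi)$. Since $a(n)\in\mathbb Q$ for $n<0$, the difference $f-f^\sigma$ lies in $A^\epsilon(N,k,\chi)$ and has trivial principal part at $\infty$. The hypothesis $k\leq 0$ places us in Corollary 3.6(3), whose uniqueness forces $f=f^\sigma$. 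As $\sigma$ was arbitrary, every $a(n)\in\mathbb Q$.

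For bounded denominators, let $d$ be a common denominator for the finitely many nonzero $a(n)$ with $n<0$. Theorem 4.4 handles the constant term via
\[a(0)=-\sum_{n<0}s(n)a(n)B(-n),\]
and the Fourier coefficients $B(m)$ of $E^\epsilon$ are rational with a denominator depending only on $N$, $k$, and $\epsilon$: each $B(m)$ is an explicit twisted-divisor sum normalised by $s(0)$ and by the nonzero rational value $L(k-1,\chi)$. Hence $a(0)$ has denominator dividing some $d'=d'(N,k,\epsilon,d)$. For positive-index coefficients, the plan is to exploit the fact that, by the uniqueness in Corollary 3.6(3), a form in $A^\epsilon(N,k,\chi)$ is uniquely determined by its principal part. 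Thus the rational subspace of $A^\epsilon(N,k,\chi)$ with principal part of denominator dividing $d$ is isomorphic to the finite-dimensional $\mathbb Q$-space of admissible principal parts cut out by the obstruction pairing of Theorem 4.4. Choosing a finite $\mathbb Q$-basis of admissible principal parts with common denominator $d$ and writing $f$ as a rational linear combination of the corresponding weakly holomorphic forms reduces the problem to bounding the denominators of these finitely many basis elements.

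The main obstacle is thus producing, for each such basis element, a uniform denominator bound on \emph{all} its Fourier coefficients (not just the constant term). The standard device is to multiply by a high power of a rational weakly holomorphic form with integral coefficients (for instance an $\eta$-quotient or Hauptmodul of level $N$) in order to cancel the poles and land in a finite-dimensional space of holomorphic modular forms of integer weight and character, to which the $q$-expansion principle applies, giving rational elements with bounded denominators; inverting the multiplication then transfers this bound back to the original weakly holomorphic form. Since this can be done once and for all for each fixed $N$, $k$, $\epsilon$, and $d$, the resulting denominator bound depends only on these data, completing the proof. The related and much deeper question of whether one can actually choose a Miller-style basis with denominators equal to $s(n)$ is precisely the open problem raised in the final section of the paper.
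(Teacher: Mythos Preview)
Your rationality argument is correct and matches the paper's: both use Lemma~4.6 and the observation that the $\epsilon$-condition is Galois-stable, then kill $f-f^\sigma$ using $k\le 0$. You cite Corollary~3.6(3) where the paper cites Corollary~3.6(1) followed by $M(N,k,\chi)=0$, but these are the same argument.

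For bounded denominators, the paper's proof is a single line: since $f$ already has rational coefficients, $f\Delta^{k'}\in M(N,k+12k',\chi)$ for $k'$ large enough, and holomorphic forms with rational $q$-expansion have bounded denominators; dividing back by $\Delta^{k'}$ (integral, invertible as a power series) finishes. Your proposal eventually arrives at this same device (multiply by an integral-coefficient form to land in a holomorphic space), but the detour you take to get there is both unnecessary and contains a misstatement. The ``finite-dimensional $\mathbb Q$-space of admissible principal parts cut out by the obstruction pairing'' is \emph{not} finite-dimensional: it is the kernel of a finite-rank pairing on the infinite-dimensional space $\mathbb Q[q^{-1}]$, hence itself infinite-dimensional. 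What you presumably intend is to restrict to principal parts supported on the finitely many negative exponents actually occurring in $f$; but once you realise that, there is no need for a basis decomposition or for the separate Eisenstein computation of $a(0)$ at all---just apply the $\Delta^{k'}$ trick directly to $f$.
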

\begin{proof}
Let $\sigma\in\text{Gal}(\mathbb C/\mathbb Q)$. We note that $f^\sigma\in A^\epsilon(N,k,\chi)$. Indeed, from Lemma \ref{Galois}, we see that $f^\sigma\in A(N,k,\chi)$; moreover, the Galois action preserves the $\epsilon$-condition.

Now consider $h=f-f^\sigma\in A^\epsilon(N,k,\chi)$. It is obvious that $h$ is holomorphic at $\infty$, hence $h\in M(N,k,\chi)$ by Corollary \ref{Holomorphy}. But $k\leq 0$, so $M(N,k,\chi)=\{0\}$. It follows that $f$ has rational coefficients. Since $f\Delta^{k'}\in M(N,k+12k',\chi)$ for large $k'$, it has coefficients with bounded denominator, hence so does $f$.
\end{proof}

\section{Zagier Duality}

\noindent
In this section, we prove the Zagier duality for integral weight weakly holomorphic modular forms.

We assume that $N_2$, the $2$-part of $N$, is $1$ or $4$, for simplicity.  Actually this condition on $N$ will not be used until Theorem 5.7 on Zagier duality. We keep other notations in the previous section. To better describe the statements, we introduce a notion of weakly holomorphic modular forms with $\epsilon$-condition.

\begin{Def}
Let $k,m$ be integers and $\epsilon$ any sign vector. We call $f=\sum_{n}a(n)q^n\in A^\epsilon(N,k,\chi)$ \emph{reduced of order} $m$, if $f=\frac{1}{s(m)}q^{m}+O(q^{m+1})$ and for each $n>m$ with $a(n)\neq 0$, there does not exist $g\in A^\epsilon(N,k,\chi)$ such that $g=q^n+O(q^{n+1})$.
\end{Def}

\begin{Lem}
For any integer $m$, there exists at most one $f\in A^\epsilon(N,k,\chi)$ such that $f$ is reduced of order $m$.
\end{Lem}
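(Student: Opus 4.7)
The plan is to argue by contradiction in the most direct way possible. Suppose $f_1, f_2 \in A^\epsilon(N,k,\chi)$ are two distinct reduced forms of order $m$, with Fourier expansions $f_i = \sum_{n \ge m} a_i(n) q^n$. By the reducedness condition, both leading coefficients satisfy $a_1(m) = a_2(m) = 1/s(m)$, so the difference $h := f_1 - f_2 \in A^\epsilon(N,k,\chi)$ vanishes to order strictly greater than $m$ at $\infty$. Since $h \neq 0$ by assumption, let $n_0 > m$ be the smallest index with $a_1(n_0) - a_2(n_0) \neq 0$; write $c := a_1(n_0) - a_2(n_0)$.

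Next, I would verify that the normalized form $g := c^{-1} h = q^{n_0} + O(q^{n_0+1})$ sits in $A^\epsilon(N,k,\chi)$: the space is a $\mathbb{C}$-vector space, so closure under difference and scaling is immediate, and in particular this also forces $n_0$ to be an $\epsilon$-integer (otherwise the $q^{n_0}$ coefficient in any element of $A^\epsilon(N,k,\chi)$ would necessarily vanish). The existence of $g$ is the key obstruction I want to exploit.

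Now I invoke the reducedness hypothesis on, say, $f_1$. Since $c = a_1(n_0) - a_2(n_0) \neq 0$, at least one of $a_1(n_0), a_2(n_0)$ is nonzero; relabeling $f_1 \leftrightarrow f_2$ if necessary, assume $a_1(n_0) \neq 0$. But then $n_0 > m$ is an index with $a_1(n_0) \neq 0$, and reducedness of $f_1$ asserts that no element of $A^\epsilon(N,k,\chi)$ can have the normalized form $q^{n_0} + O(q^{n_0+1})$ — directly contradicting the existence of $g$ produced in the previous step.

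There is really no serious obstacle here; the entire argument is a two-line linear-algebra observation once the definition is unpacked, and the only subtlety worth mentioning explicitly is that subtracting two elements of $A^\epsilon(N,k,\chi)$ stays inside $A^\epsilon(N,k,\chi)$ and preserves the $\epsilon$-condition, so the candidate $g$ is genuinely an element of the ambient space against which reducedness is tested.
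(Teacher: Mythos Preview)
Your proof is correct and follows essentially the same approach as the paper: take the difference of two putative reduced forms of order $m$, locate the first nonvanishing coefficient of the difference, normalize to produce an element of the form $q^{n_0}+O(q^{n_0+1})$, and derive a contradiction with the reducedness hypothesis. The paper's argument is slightly terser but identical in substance.
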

\begin{proof}
Suppose there are two such modular forms, say $f=\sum_na(n)q^n$ and $g=\sum_nb(n)q^n$, we prove that they must be equal. Indeed, if $f\neq g$, then let $n$ be the smallest such that $a(n)\neq b(n)$. Now $f-g\in A^\epsilon(N,k,\chi)$ and its Fourier expansion begins with the $q^n$-term. Since at least one of $a(n)$ and $b(n)$ is not $0$, this contradicts to the assumption that both $f$ and $g$ are reduced.
\end{proof}

If the data $N,k,\chi$ and $\epsilon$ are clear from the context, we shall denote by $f_m$ the reduced modular form of order $m$ in $A^\epsilon(N,k,\chi)$ if it exists. We shall also denote by $f_m'$ the reduce modular form of order $m$ in $A^{\epsilon^*}(N,2-k,\chi)$ if it exists.

For each integer $m$, let $A^\epsilon_m(N,k,\chi)$ denote the subspace of modular forms $f\in A^\epsilon(N,k,\chi)$ such that $f=\sum_{n\geq m}a(n)q^n$. For example, $A^\epsilon_0(N,k,\chi)=M^\epsilon(N,k,\chi)$ and $A^\epsilon_1(N,k,\chi)=S^\epsilon(N,k,\chi)$ by Corollary 3.6.

\begin{Prop} For any integer $m$,
the set $\{f_n: n\geq m, f_n\text{\ exists}\}$ is a basis for $A^\epsilon_m(N,k,\chi)$. In particular, we have canonical  bases for the spaces $A^\epsilon(N,k,\chi)$, $M^\epsilon(N,k,\chi)$ and $S^\epsilon(N,k,\chi)$ that consist of reduced modular forms.
\end{Prop}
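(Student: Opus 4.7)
The plan is to reduce the basis claim to a dimension count that leverages the finite-dimensionality of $A^\epsilon_m(N,k,\chi)$. Linear independence of $\{f_n : n \geq m,\, f_n \text{ exists}\}$ is immediate and would be disposed of first: each $f_n$ has leading term $s(n)^{-1} q^n$, so distinct indices give distinct leading orders and a nontrivial finite linear combination cannot vanish.

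The key preparatory fact is that $A^\epsilon_m(N,k,\chi)$ is finite-dimensional for every integer $m$. I would deduce this from Corollary~\ref{Holomorphy}(1): any $f \in A^\epsilon$ holomorphic at $\infty$ lies automatically in $M^\epsilon(N,k,\chi) \subseteq M(N,k,\chi)$, so $A^\epsilon_0 = M^\epsilon$ is finite-dimensional. For $m < 0$, the principal-part map $f \mapsto \sum_{m \leq n < 0} a(n) q^n$ has kernel $A^\epsilon_0$ and image of dimension at most $|m|$, giving $\dim A^\epsilon_m \leq \dim M^\epsilon + |m|$; for $m \geq 1$, one has $A^\epsilon_m \subseteq S^\epsilon \subseteq M(N,k,\chi)$.

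Set $\Sigma = \{n \in \mathbb Z : A^\epsilon_n \supsetneq A^\epsilon_{n+1}\}$, the set of leading orders realized by nonzero elements of $A^\epsilon$. The descending chain $A^\epsilon_m \supseteq A^\epsilon_{m+1} \supseteq \cdots$ has a codimension-one drop exactly at each $n \in \Sigma \cap [m,\infty)$ and must terminate at $\{0\}$ by finite-dimensionality, so $|\Sigma \cap [m,\infty)| = \dim A^\epsilon_m$. To exhibit $f_n$ for each $n \in \Sigma$, I would start from any $g \in A^\epsilon_n$ normalized to have leading coefficient $s(n)^{-1}$, and then for each $n' \in \Sigma \cap (n,\infty)$ in increasing order --- a finite set by the above --- subtract a suitably scaled element of $A^\epsilon_{n'} \setminus A^\epsilon_{n'+1}$ so as to kill the $q^{n'}$ coefficient. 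The result is precisely the reduced modular form of order $n$, whose uniqueness is the preceding lemma. Linear independence together with the equality $|\{f_n : n \in \Sigma \cap [m,\infty)\}| = \dim A^\epsilon_m$ then gives the basis statement.

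The ``in particular'' clause follows by specialization: Corollary~\ref{Holomorphy}(1) gives $M^\epsilon = A^\epsilon_0$ and $S^\epsilon = A^\epsilon_1$, while $A^\epsilon = \bigcup_m A^\epsilon_m$ since every weakly holomorphic form has only finitely many negative-power terms, so the canonical basis for $A^\epsilon$ is simply $\{f_n : n \in \Sigma\}$. The main obstacle is the finite-dimensionality of $A^\epsilon_m$: without Corollary~\ref{Holomorphy}(1), which uses the $\epsilon$-condition essentially to transfer holomorphy at $\infty$ to holomorphy at every cusp, the chain of $A^\epsilon_n$'s could in principle fail to terminate and the iterative construction of $f_n$ would have no finiteness guarantee.
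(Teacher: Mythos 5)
Your argument is correct, and its skeleton matches the paper's: linear independence from distinct leading orders, finiteness of the set of realized orders above any fixed $m$, and a Gaussian-elimination step that produces the reduced forms. The genuine difference is where the finiteness comes from and how spanning is concluded. The paper bounds the realized orders from above by Sturm's theorem (a form in $A^\epsilon(N,k,\chi)$ with leading order beyond the Sturm bound is a cusp form forced to vanish) and then proves spanning by a downward induction on $m$. You instead prove that each truncation $A^\epsilon_m(N,k,\chi)$ is finite-dimensional --- using Corollary~\ref{Holomorphy}(1) to identify $A^\epsilon_0$ with $M^\epsilon(N,k,\chi)\subseteq M(N,k,\chi)$ and the principal-part map to handle $m<0$ --- and then turn the basis claim into the dimension count $|\Sigma\cap[m,\infty)|=\dim A^\epsilon_m$, where $\Sigma$ is the set of realized leading orders; the observation that each quotient $A^\epsilon_n/A^\epsilon_{n+1}$ has dimension at most one (via the $q^n$-coefficient) makes this count clean. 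Your route is slightly more self-contained, avoiding Sturm's theorem, and it isolates exactly where the $\epsilon$-condition enters (holomorphy at $\infty$ controlling all cusps, hence finite-dimensionality); the paper's route yields in addition an effective upper bound on the largest order of a reduced cusp form, which is what is exploited in the computations of Section 6. Both arguments are complete.
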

\begin{proof}
Clearly such a set is linearly independent since they have different lowest power terms.
We then only need to show that reduced forms span the whole space.
Let $f$ be any non-zero form in $A^\epsilon(N,k,\chi)$ and we may assume that $f=\frac{1}{s(m)}q^m+O(q^{m+1})$. If $m>0$, then $f$ is a cusp form. We know that $m$ cannot be big, since otherwise it forces $f=0$ by Sturm's Theorem (see \cite{sturm1987congruence} or Theorem 6.4 below); note that such Sturm's bound depends only on $N$.

If $f$ is reduced, then we are done. If not, we have a finite set of integers $n$ such that $n>m$ and a modular form $g_n=q^n+O(q^{n+1})\in A^\epsilon(N,k,\chi)$ exists. By induction on $m$, we may assume that $g_n$ is a linear combination of reduced modular forms for each $n$. It is clear that $f-\sum_nc_ng_n$ is reduced for some scalars $c_n$, hence $f$ itself is a linear combination of reduced modular forms.
\end{proof}

For an integer $m$, we say that $m$ satisfies the $\epsilon$-condition, if $\chi_p(m)\neq -\epsilon_p$ for each $p\mid N$.

\begin{Lem}
Assume $k\geq 2$ and $m\leq 0$. Then there exists a reduced modular form of order $m$ in $A^\epsilon(N,k,\chi)$ if and only if $m$ is an $\epsilon$-integer. If it exists, such a form contain precisely one non-positive power term, that is $\frac{1}{s(m)}q^m$.
\end{Lem}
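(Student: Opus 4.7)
The proof splits into necessity and sufficiency. Necessity is immediate: if a reduced form $f = \frac{1}{s(m)}q^m + \cdots$ lies in $A^\epsilon(N,k,\chi)$, then the nonzero $q^m$-coefficient $\frac{1}{s(m)}$ forces $m$ to be an $\epsilon$-integer. For sufficiency, fix an $\epsilon$-integer $m \leq 0$; the plan is first to construct an auxiliary form $f^{(0)}$ whose only non-positive power term is $\frac{1}{s(m)}q^m$, then to knock out finitely many positive-index coefficients to reach the reduced form $f_m$.

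To construct $f^{(0)}$, I pass to the vector-valued side via Theorem 3.3 and invoke Proposition 4.1 with the $\text{Aut}(D)$-invariant polynomial
\[ P = \sum_{\gamma \in D,\ q(\gamma) \equiv m/N \imod 1} q^{m/N} e_\gamma. \]
For $k > 2$ the obstruction space $\mathcal M^\text{inv}(2-k,\rho_D^*)$ vanishes; for $k = 2$ and $m < 0$ the pairing $\langle P, G \rangle$ vanishes automatically, since every $P_\gamma$ has leading power $q^{m/N}$ with $m/N < 0$ while each $G_\gamma$ contributes only non-negative powers (the boundary case $k = 2, m = 0$ is handled directly using a suitably normalized weight-$2$ Eisenstein series). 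Proposition 4.1 then yields $F \in \mathcal A^\text{inv}(k,\rho_D)$ with $F - P = O(q_N)$, and setting $f^{(0)} = \phi(F) = \sum a(n) q^n$, the coefficient formula of Theorem 3.3 translates the matching $F_\gamma \equiv P_\gamma \pmod{q_N}$ into $s(n) a(n) = [q^{n/N}]P_\gamma$ for any $\gamma$ with $Nq(\gamma) \equiv n \imod N$ and every $n \leq 0$. This reads off $a(m) = 1/s(m)$ and $a(n) = 0$ for all other $n \leq 0$.

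Next, let $S = \{n : \text{some } g_n = q^n + O(q^{n+1}) \text{ lies in } A^\epsilon(N,k,\chi)\}$. The crucial finiteness claim is that $S \cap (m,\infty)$ is finite: its intersection with $(m,0]$ is finite because that interval contains only finitely many integers, and its intersection with $[1,\infty)$ is bounded by $\dim S^\epsilon(N,k,\chi)$, since one $g_n$ per such index yields linearly independent cusp forms (by Corollary 3.6) in that finite-dimensional space. Enumerating $S \cap (m,\infty) = \{n_1 < \cdots < n_r\}$, I iteratively subtract: if $f^{(i-1)}$ has $q^{n_i}$-coefficient $c_i$, set $f^{(i)} = f^{(i-1)} - c_i g_{n_i}$. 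Because $g_{n_i}$ starts at $q^{n_i}$, each subtraction annihilates the $q^{n_i}$-coefficient while leaving every coefficient at a strictly smaller index unchanged. The endpoint $f_m := f^{(r)}$ then has $q^m$-coefficient $1/s(m)$ and vanishes at every $n \in S \cap (m,\infty)$, so it is reduced of order $m$.

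For the shape assertion, any $n$ with $m < n \leq 0$ is either not an $\epsilon$-integer (so the $q^n$-coefficient of $f_m$ vanishes by the $\epsilon$-condition) or is an $\epsilon$-integer (so Step 1 applied with $n$ in place of $m$ places $n$ in $S$, and Step 2 then forces the $q^n$-coefficient to vanish). Hence $\frac{1}{s(m)}q^m$ is the unique non-positive power term of $f_m$. The principal obstacle I foresee is justifying the finiteness of $S \cap (m,\infty)$, without which the iterative elimination would only produce a formal series rather than a genuine modular form; a secondary subtlety is the $k=2, m=0$ boundary case, but once these are settled the remainder follows directly from Theorem 3.3 and Proposition 4.1.
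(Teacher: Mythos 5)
Your overall route is the same as the paper's: for $k\geq 2$ the obstruction space in Proposition 4.1 is trivial, so every $\epsilon$-monomial with non-positive exponent lifts through the isomorphism, and reducedness then forces exactly one non-positive term; the elimination/termination step you spell out (finiteness of the set $S$ via Corollary 3.6 and $\dim S^\epsilon(N,k,\chi)<\infty$) is essentially what Proposition 5.3 already supplies, so that part is fine, just more detailed than the paper's compressed argument.

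There is, however, one step whose justification is genuinely wrong: the boundary case $k=2$, $m<0$. You claim $\langle P,G\rangle=0$ ``automatically'' because each $P_\gamma$ carries only the negative power $q^{m/N}$ while each $G_\gamma$ has only non-negative powers. But the pairing is the constant term of $\sum_\gamma P_\gamma G_\gamma$, so it extracts the coefficient of $q^{-m/N}$ in $G_\gamma$, and $-m/N>0$ is a perfectly admissible exponent for a holomorphic form $G$; indeed this is exactly how the nontrivial obstructions in Theorem 4.4 and Lemma 5.5 arise, so your reasoning, taken literally, would show that obstructions never occur in any weight, which is false. What actually saves the case $k=2$ is that the obstruction space itself is trivial: a form in $\mathcal M^{\text{inv}}(0,\rho_D^*)$ has components that are weight-$0$ forms holomorphic at all cusps, hence constants, and via the isomorphism (or directly, since the primitive character $\chi$ is nontrivial) one gets $\mathcal M^{\text{inv}}(0,\rho_D^*)\cong M^{\epsilon^*}(N,0,\chi)=0$; alternatively, even a constant $G$ pairs to zero against a principal part supported in strictly negative powers. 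This is the content of the paper's remark that $2-k\leq 0$ makes the obstruction space vanish, and it is a one-line repair, but as written your argument for $k=2$ does not hold. A minor implicit point elsewhere: you should note that for an $\epsilon$-integer $m$ the set $\{\gamma\in D\colon q(\gamma)\equiv m/N\ (\mathrm{mod}\ 1)\}$ is nonempty (so that your $P\neq 0$ and the coefficient really is $1/s(m)$); this is part of the case-by-case check underlying Theorem 3.3.
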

\begin{proof}
By Proposition 4.1 and our isomorphism, we can have the obstruction theorem for scalar-valued modular forms in the case when $k\geq 2$. Here $2-k\leq 0$, hence the obstruction space is trivial. Therefore, any $\epsilon$-polynomial in $q^{-1}$ lifts to a modular form in $A^\epsilon(N,k,\chi)$. In particular, all $\epsilon$-monomials in $q^{-1}$ can be lifted. Then for a reduced modular form, there is only one non-positive power term in $q$.
\end{proof}

\begin{Lem} Assume $k\leq 0$ and $m<0$. Let $\{f_n': n\in S\}$ be the basis of reduced modular forms for $S^{\epsilon*}(N,2-k,\chi)$; here $S$ is a uniquely determined finite set of positive integers.
Then $f_m$ exists if and only if $m$ is an $\epsilon$-integer and $-m\not\in S$.
\end{Lem}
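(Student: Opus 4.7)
The overall plan is to reduce to the obstruction theorem (Theorem 4.4), which precisely characterizes the $\epsilon$-polynomials that occur as principal parts of elements of $A^\epsilon(N,k,\chi)$. A useful preliminary observation is that $f_m$ exists if and only if some $f\in A^\epsilon(N,k,\chi)$ has Fourier expansion at $\infty$ starting at $q^m$ with nonzero coefficient: the "only if" is immediate, while for the "if" direction Proposition 5.3 decomposes $f$ as a linear combination of reduced forms $f_n$ with $n\geq m$, and matching leading terms forces $f_m$ itself to appear with nonzero coefficient.

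For necessity, assume $f_m=\frac{1}{s(m)}q^m+O(q^{m+1})$ exists. The $\epsilon$-condition applied to the leading coefficient immediately forces $m$ to be an $\epsilon$-integer. Suppose for contradiction that $-m\in S$, and test Theorem 4.4 against $g=f'_{-m}=\frac{1}{s(-m)}q^{-m}+O(q^{-m+1})$. Since the $q^r$-coefficient of $g$ vanishes for $0<r<-m$, the obstruction sum $\sum_{n<0}s(n)a(n)b(-n)$ over the principal part of $f_m$ retains only the $n=m$ term, evaluating to $s(m)\cdot\frac{1}{s(m)}\cdot\frac{1}{s(-m)}=\frac{1}{s(-m)}\neq 0$ (using $s(m)=s(-m)$), contradicting the obstruction identity.

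For sufficiency, assume $m$ is an $\epsilon$-integer and $r_0:=-m\notin S$. Write $f'_n=\sum_r b_n(r)q^r$ for $n\in S$, let $S_<=\{n\in S:n<r_0\}$, and consider the trial principal part
\[P=\frac{1}{s(m)}q^m-\sum_{n\in S_<}b_n(r_0)\,q^{-n}.\]
Each $-n$ with $n\in S_<$ is an $\epsilon$-integer, because $n\in S$ forces $n$ to be an $\epsilon^*$-integer (as leading exponent of $f'_n\in A^{\epsilon^*}$) and $\epsilon^*_p=\chi_p(-1)\epsilon_p$; hence $P$ is an $\epsilon$-polynomial. The key structural input is the dual-basis identity $b_{n'}(n)=\delta_{n,n'}/s(n')$ for $n,n'\in S$, which follows from the leading-order normalization of $f'_{n'}$ when $n\leq n'$ and from the reduced property of $f'_{n'}$ combined with $n\in S$ when $n>n'$. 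Using this identity and $r_0\notin S$, the obstruction against each basis element $g=f'_{n'}$ vanishes: for $n'>r_0$ all relevant Fourier coefficients of $g$ are zero, while for $n'\in S_<$ the two pieces of $P$ contribute $b_{n'}(r_0)$ and $-b_{n'}(r_0)$ respectively. Theorem 4.4 then provides $f\in A^\epsilon(N,k,\chi)$ with principal part $P$, and the preliminary observation yields $f_m$.

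The main burden is identifying the correct principal part $P$ and keeping track of the factors $s(n)$; the essential structural ingredient is the dual-basis identity among the reduced cusp forms $\{f'_n\}_{n\in S}$, after which both the necessity and sufficiency calculations become direct.
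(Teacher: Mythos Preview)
Your proof is correct and follows essentially the same approach as the paper's: necessity via the obstruction from $f'_{-m}$, and sufficiency by writing down an explicit $\epsilon$-polynomial principal part and checking the obstruction conditions using the dual-basis identity $s(n)b_{n'}(n)=\delta_{n,n'}$ for $n,n'\in S$. Your trial polynomial $P=\tfrac{1}{s(m)}q^{m}-\sum_{n\in S_<}b_n(-m)q^{-n}$ is in fact the cleaner normalization (the paper's displayed $P$ carries extra $s$-factors and writes $a_i(m)$ where $a_i(-m)$ is meant), and your added preliminary observation that the existence of $f_m$ is equivalent to the existence of any form with leading term at $q^m$ makes explicit a step the paper leaves implicit.
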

\begin{proof}
Assume that $S=\{n_1,n_2,\cdots, n_k\}$ with $n_i<n_{i+1}$ for $1\leq i<k$, and $f_{n_i}'=\sum_{n}a_i(n)q^n$ for each $i$.

It is clear that if $-m=n_i$, then $f_m$ does not exist because of the obstruction by $f_{n_i}'$. Conversely, suppose $-m\not\in S$ and consider the polynomial
\[P=\frac{1}{s(m)}q^{m}-\frac{1}{s(m)}\sum_{i}s(n_i)a_i(m)q^{-n_i}.\]
Since $f_{n_i}'$ are reduced, we must have $s(n_i)a_j(n_i)=\delta_{i,j}$. From this, we see that $P$ satisfies the obstruction conditions. Moreover, since $n_i$ satisfy the $\epsilon^*$-condition, $-n_i$ satisfies the $\epsilon$-condition and $P$ is an $\epsilon$-polynomial in $q^{-1}$. By Theorem 4.4, there exist a modular form with $P$ the principal part, and the existence of $f_m$ follows.
\end{proof}

We shall assume $k\neq 1$. Because of the dual weights $k$ and $2-k$, without loss of generality, from now on until the end of this section, we assume that $k\leq 0$.

\begin{Lem}
Let $\epsilon=(\epsilon_p)$ be any sign vector and let $\epsilon^*=(\epsilon_p^*)$ with $\epsilon^*_p=\chi_p(-1)\epsilon_p$. Assume $m,d\in\mathbb Z$, $m<0$, and that both of the reduced modular forms \[f_{m}=\sum_{n\in\mathbb Z}a_{m}(n)q^n\in A^\epsilon(N,k,\chi) \quad \text{\ and\ }\quad  f_{d}'=\sum_{n\in\mathbb Z}b_{d}(n)q^n\in A^{\epsilon^*}(N,2-k,\chi)\]exist. Then $a_m(-n)b_d(n)=0$ for any $d<n<-m$.
\end{Lem}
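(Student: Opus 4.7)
The plan is to fix any integer $n$ with $d<n<-m$ and show $a_m(-n)b_d(n)=0$ by splitting the argument according to the sign of $n$. In the range $n\leq 0$ I will argue that $b_d(n)=0$ outright, using the reduced property of $f_d'$ together with Lemma~5.4. In the range $n>0$ I will assume $a_m(-n)\neq 0$ and deduce $b_d(n)=0$ via Lemma~5.5. No product $f_m f_d'$ or weight-$2$ residue calculation will be needed; the claim will drop out coefficient by coefficient.

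First, suppose $d<n\leq 0$. If $n$ violates the $\epsilon^*$-condition then $b_d(n)=0$ immediately from $f_d'\in A^{\epsilon^*}(N,2-k,\chi)$. Otherwise $n$ is an $\epsilon^*$-integer, and since $2-k\geq 2$ and $n\leq 0$ Lemma~5.4 produces a reduced form $f_n'\in A^{\epsilon^*}(N,2-k,\chi)$; then $g:=s(n)f_n'=q^n+O(q^{n+1})$ is an element of the same space with leading term $q^n$. Because $n>d$, the reducedness of $f_d'$ forces $b_d(n)=0$. Hence $a_m(-n)b_d(n)=0$ throughout this subrange, regardless of $a_m(-n)$.

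Next, suppose $0<n<-m$, and assume $a_m(-n)\neq 0$ (otherwise the product is trivially zero). The $\epsilon$-condition on $f_m$ forces $-n$ to be an $\epsilon$-integer, and since $-n>m$ the reducedness of $f_m$ says that no $g\in A^\epsilon(N,k,\chi)$ has leading term $q^{-n}$; via the basis result in Proposition~5.3 this is equivalent to the nonexistence of the reduced form $f_{-n}$. Applying Lemma~5.5 with parameter $-n<0$ then forces $n\in S$, that is, a reduced cusp form $f_n'$ exists in $S^{\epsilon^*}(N,2-k,\chi)$. Now $s(n)f_n'=q^n+O(q^{n+1})$ lies in $A^{\epsilon^*}(N,2-k,\chi)$ and $n>d$, so the reducedness of $f_d'$ forces $b_d(n)=0$, as required.

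The only substantive step I anticipate is the bootstrapping in the second case: Lemma~5.5, powered by the obstruction theorem of Section~4, converts a nonexistence statement in weight $k$ (no form of leading order $q^{-n}$) into an existence statement of a cusp form of the matching order $n$ in weight $2-k$. Both invocations of reducedness then collapse to checking a single Fourier coefficient, and the two subranges together cover $d<n<-m$. If any difficulty arises it will be a bookkeeping check that the appropriate hypotheses of Lemma~5.4 and Lemma~5.5 are satisfied in each subcase, which the argument above verifies.
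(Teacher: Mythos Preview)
Your proof is correct and follows essentially the same approach as the paper. The only cosmetic difference is that the paper organizes the case split according to the sign of $d$ (handling $d\le 0$ and $d>0$ separately) whereas you split according to the sign of $n$; in both versions the non-positive range is disposed of via Lemma~5.4 and the positive range via the contrapositive of Lemma~5.5, followed by the definition of reducedness for $f_d'$.
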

\begin{proof}
Assume first that $d\leq 0$. Since $2-k\geq 2$ and $f_d'$ is reduced, we must have $b_d(n)=0$ if $-d<n\leq 0$ by Lemma 5.4. Therefore, we only need to consider the case when $0<n<-m$.
We fix any $0<n_0<-m$ such that $a_m(-n_0)\neq 0$ and it suffices to prove that $b_d(n_0)=0$. We first note that $f_{-n_0}$ does not exist and by Lemma 5.5 this means that $f_{n_0}'$ exists in the dual cusp form space. Since $f_d'$ is also reduced, we must have $b_d(n_0)=0$.

Similarly, if $d>0$, then for any $d<n_0<-m$ with $a_m(n_0)\neq 0$, we must have the existence of $f_{n_0}'$. That $f_d'$ is reduced implies $b_d(n_0)=0$.
\end{proof}

\begin{Thm}\label{MainTheorem} Let $\epsilon=(\epsilon_p)$ be any sign vector and let $\epsilon^*=(\epsilon_p^*)$ with $\epsilon^*_p=\chi_p(-1)\epsilon_p$. Assume $m,d\in\mathbb Z$ with $m<0$. Assume that both of the reduced modular forms \[f_{m}=\sum_{n\in\mathbb Z}a_{m}(n)q^n\in A^\epsilon(N,k,\chi) \quad \text{\ and\ }\quad  f_{d}'=\sum_{n\in\mathbb Z}b_{d}(n)q^n\in A^{\epsilon^*}(N,2-k,\chi)\] exists. Then we  have $a_{m}(-d)=-b_{d}(-m)$.
\end{Thm}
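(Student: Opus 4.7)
The plan is to deduce the duality from the vanishing of the constant term of an auxiliary weight-$2$ weakly holomorphic modular form for the full modular group $\text{SL}_2(\mathbb Z)$. Via the isomorphism of Theorem 3.3, set $F=\psi(f_m)\in\mathcal A^{\text{inv}}(k,\rho_D)$ and $G=\psi(f_d')\in\mathcal A^{\text{inv}}(2-k,\rho_{D^*})$, and put $h(\tau):=\sum_{\gamma\in D}F_\gamma(\tau)G_\gamma(\tau)$. From the explicit formulas for the Weil representations, $\rho_{D^*}(M)=\overline{\rho_D(M)}$ on generators $T,S$, and combined with unitarity of $\rho_D$ this yields $\rho_D(M)^T\rho_{D^*}(M)=I$ for every $M\in\text{SL}_2(\mathbb Z)$. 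This is exactly the identity needed for $h|_2M=h$, so $h\in A(1,2,1)$. The constraints $n\equiv Nq(\gamma)$ and $n'\equiv -Nq(\gamma)\imod N$ on Fourier exponents force $n+n'\in N\mathbb Z$, so $h$ admits an integral $q$-expansion.

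Next, every weakly holomorphic form of weight $2$ for $\text{SL}_2(\mathbb Z)$ has vanishing constant term: $h\,d\tau$ descends to a meromorphic $1$-form on the compact, torsion-free cover $X(L)$ for any $L\geq 2$, and by $\text{SL}_2(\mathbb Z)$-invariance its residue at every cusp of $X(L)$ equals a fixed nonzero multiple of the constant term of $h$; the residue theorem then forces the constant term to be zero. Using Theorem 3.3 (with $q^*=-q$), this constant term unpacks to
\[
\sum_{n\in\mathbb Z}s(n)^2\,N_\gamma(n)\,a_m(n)\,b_d(-n)=0,
\]
where $N_\gamma(n):=\#\{\gamma\in D\colon Nq(\gamma)\equiv n\imod N\}$. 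Substituting $n\mapsto -n$, Lemma 5.6 kills every term with $d<n<-m$; combined with the vanishings $a_m(-n)=0$ for $-n<m$ and $b_d(n)=0$ for $n<d$, only the endpoint contributions $n=d$ and $n=-m$ survive. Inserting $a_m(m)=1/s(m)$ and $b_d(d)=1/s(d)$ collapses the identity to
\[
s(d)\,N_\gamma(-d)\,a_m(-d)+s(m)\,N_\gamma(m)\,b_d(-m)=0.
\]

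To finish, I invoke the identity $s(n)N_\gamma(n)=2^{\omega(N)}$ valid for every integer $n$ satisfying the $\epsilon$-condition (and $N_\gamma(n)=0$ otherwise). This is a direct consequence of $\phi\circ\psi=\text{id}$: one computes $\phi(\psi(f))(\tau)=2^{-\omega(N)}\sum_n s(n)N_\gamma(n)a(n)q^n$, whose agreement with $f(\tau)=\sum_n a(n)q^n$ for every $f\in A^\epsilon(N,k,\chi)$ forces the stated identity. Since $a_m(m)\neq 0$, the integer $m$ satisfies the $\epsilon$-condition, so $s(m)N_\gamma(m)=2^{\omega(N)}\neq 0$. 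If $a_m(-d)=0$, the displayed relation immediately forces $b_d(-m)=0$. Otherwise $-d$ also satisfies the $\epsilon$-condition and $s(d)N_\gamma(-d)=2^{\omega(N)}$, whence the relation collapses to $a_m(-d)+b_d(-m)=0$, establishing the Zagier duality.

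The two genuine technical points are the modularity of $h$ on all of $\text{SL}_2(\mathbb Z)$ and the vanishing of its constant term; these rest respectively on the duality $\rho_{D^*}=\overline{\rho_D}$ and on the residue theorem applied on $X(L)$. The rest is essentially bookkeeping, although the uniform identity $s(n)N_\gamma(n)=2^{\omega(N)}$ quietly encodes substantial information about the quadratic form on $D$.
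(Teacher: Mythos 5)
Your proof is correct, but it takes a genuinely different route from the paper's. The paper stays entirely on the scalar side: it forms $ff'\in A(N,2,1)$ and applies the residue theorem on $X_0(N)$, which requires computing the residue of $ff'\,d\tau$ at every cusp $\tfrac{1}{m}$ of $\Gamma_0(N)$ via the $\eta_{m_1}$- and $U(m_1)$-operators (Lemma 2.5 and Lemma 3.1), with a separate, messier computation at the cusps $\tfrac{1}{2m_1}$ when $2^2||N$ (using Remark 3.5); this cusp analysis is the reason Theorem 5.7 is stated with $N_2\in\{1,4\}$. You instead transfer both forms to the vector-valued side and pair them: $h=\sum_\gamma F_\gamma G_\gamma$ with $F=\psi(f_m)$, $G=\psi(f'_d)$ is weight $2$ for all of $\text{SL}_2(\mathbb Z)$ because $\rho_{D^*}(M)=\overline{\rho_D(M)}$ (checked on $T,S$) together with unitarity gives $\rho_D(M)^T\rho_{D^*}(M)=I$, and its constant term vanishes by the residue theorem at level one --- essentially Borcherds's pairing mechanism behind Proposition 4.1, redone with both inputs weakly holomorphic. (Your detour through $X(L)$ is fine but not needed: the invariant form $h\,d\tau$ already descends without poles at elliptic points of $X(1)$.) Unwinding the constant term through Theorem 3.3 and killing the middle range with Lemma 5.6 yields your two-term relation, which plays exactly the role of the paper's sum over cusps. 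What your route buys: no cusp-by-cusp computation, no case distinction at $2$, and no apparent use of the hypothesis $N_2\in\{1,4\}$, so it should extend Theorem 5.7 to $8\mid N$ (the paper notes that Lemmas 5.2--5.6 do not use that hypothesis either). Two small points to tighten, neither a real gap: (i) the identity $s(n)N_\gamma(n)=2^{\omega(N)}$ for \emph{every} $\epsilon$-integer $n$ does not follow from ``$\phi\circ\psi=\mathrm{id}$ for all $f$'' alone unless some $f\in A^\epsilon(N,k,\chi)$ with $a(n)\neq 0$ is known to exist for that $n$; but you only need it at $n=m$ and $n=-d$, where $f_m$ itself supplies the nonvanishing coefficient, so apply the coefficientwise identity $s(n)N_\gamma(n)a_m(n)=2^{\omega(N)}a_m(n)$ directly to $f_m$ at those two exponents; (ii) the collapse to two endpoint terms tacitly assumes $m\neq -d$, which is automatic when both forms exist (if $d=-m>0$, then $f'_d$ is a reduced cusp form in the dual space, so $-m$ lies in the set $S$ of Lemma 5.5 and $f_m$ could not exist), and deserves one sentence.
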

\begin{proof}
We denote $f=f_{m}$ and $f'=f_{d}'$. Since $ff'\in A(N,2,1)$, we have a meromorphic $1$-form $ff'd\tau$ on the compact Riemann surface $X_0(N)$, so the sum of residues of $ff'd\tau$ must vanish. Since $ff'$ is holomorphic on $\mathbb H$, $ff'd\tau$ is holomorphic on $X_0(N)$ except at the cusps. For a cusp $s$ with width $h_s$, let $\alpha\in\text{SL}_2(\mathbb Z)$ such that $\alpha\infty\sim s$. Then we know that the residue of $ff'd\tau$ at $s$ is given by
the constant term in $q_s$ of $\frac{h_s}{2\pi i}(ff')|\alpha$. Here $q_s=q^{1/h_s}$. The reader may see Section 2.3 of Miyake's book \cite{miyake2006modular} for more details on this. Let $m_1$ be an odd positive divisor of $N$.

We first deal with the case when $N$ is odd. For a cusp $s\sim \frac{1}{N/m_1}$, $\gamma_{m_1}\infty\sim s$ and then it is easy to see that the residue of $ff'd\tau$ at $s$ is given by the constant term of $\frac{1}{2\pi i}(f|\eta_{m_1})(f'|\eta_{m_1})$.

Since $f\in A^\epsilon(N,k,\chi)$, by Lemma 2.5 and Lemma 3.1, we obtain
\[f|\eta_{m_1}= \left(\prod_{p\mid m_1} \epsilon_{p}\varepsilon_{p}\chi_p'(p)\right)m_1^{\frac{1-k}{2}}f|U(m_1).\] Similarly
\[f'|\eta_{m_1}=\left(\prod_{p\mid m_1} \epsilon^*_{p}\varepsilon_{p}\chi_p'(p)\right)m_1^{\frac{k-1}{2}}f'|U(m_1).\]
Hence $(f|\eta_{m_1})(f'|\eta_{m_1})=(f|U(m_1))(f'|U(m_1))$.

For ease of notations, for $a,b\in\mathbb Z$, we define
\[c(a,b)=\left\{
\begin{array}{cl} 0 & \text{ if\ \ } a\nmid b,\\
\frac{1}{s(b)}& \text{ if\ \ } a\mid b.
\end{array}\right.\]
Then the constant term of $(f|U(m_1))(f'|U(m_1))$ is given by
\[c(m_1,m)b_{d}(-m)+c(m_1,d)a_m(-d),\] where other terms vanish by Lemma 5.6. Summing over all $m_1\mid N$, we have
\[\sum_{m_1\mid N}c(m_1,m)=\sum_{m_1\mid N}c(m_1,d)=1.\]
Since the inequivalent cusps are precisely represented by $\frac{1}{N/m_1}$, the sum of all all residues of $ff'd\tau$ is given by $\frac{1}{2\pi i}(a_m(-d)+b_{d}(-m))$. It follows that $a_m(-d)=-b_{d}(-m)$ and we are done with this case.

Now let us assume $2^2||N$. The idea is the same, but the computations in this case are more complicated. For each $m_1\mid \frac{N}{4}$, we have the same expression as above for $f|\eta_{m_1}$ and for $f'|\eta_{m_1}$, and from the same argument we see that at the cusp $s\sim\frac{1}{N/m_1}$, the residue of $ff'd\tau$ at $s$ is given by the constant term of $\frac{1}{2\pi i}(f|\eta_{m_1})(f'|\eta_{m_1})=\frac{1}{2\pi i}(f|U(m_1))(f'|U(m_1))$. This is
\[\frac{1}{2\pi i}\left(c(m_1,m)b_{d}(-m)+c(m_1,d)a_m(-d)\right).\]

Similarly, at the cusp $s\sim \frac{1}{N/4m_1}$,  the residue of $ff'd\tau$ at $s$ is given by the constant term of $\frac{1}{2\pi i}(f|\eta_{2m_1})(f'|\eta_{2m_1})=\frac{1}{2\pi i}(f|U(4m_1))(f'|U(4m_1))$. This is
\[\frac{1}{2\pi i}\left(c(4m_1,m)b_{d}(-m)+c(4m_1,d)a_m(-d)\right).\]

We are left with the cusps of the form $s\sim \frac{1}{2m_1}$ with $m_1\mid\frac{N}{4}$. This time the matrix $\alpha_{2m_1}$ in Remark 3.5 will do the job. From the expression there, we see that
the residue of $ff'd\tau$ at $s$ is given by the constant term of $\frac{1}{2\pi i}(f|\alpha_{2m_1})(f'|\alpha_{2m_1})$, which is given by
\[\frac{1}{2\pi i}\left(\chi_2(m/2)^2c(N/2m_1,m)b_{d}(-m)+\chi_2(d/2)^2c(N/2m_1,d)a_{m}(-d)\right).\] Note here that if $2\nmid m$, the value of the first term is understood to be $0$ even though $\chi_2(m/2)$ is not defined; the $c$-factor is $0$ anyway. The same interpretation applies to the second term.

By elementary computations, we see that
\begin{itemize}
\item $\sum_{m_1\mid N_1}c(4m_1,m)=1/2$ if $4\mid m$ and $0$ otherwise,
\item $\sum_{m_1\mid N_1}c(m_1,m)=1/2$ if $2\mid m$ and $1$ otherwise,
\item $\sum_{m_1\mid N_1}\chi_2(m/2)^2c(N/2m_1,m)=1/2$ if $2\mid\mid m$ and $0$ otherwise,
\end{itemize}
and they also hold with $m$ replaced by $d$. Summing over $m_1\mid \frac{N}{4}$, we see that in this case we also have $a_m(-d)=-b_{d}(-m)$. This finishes the proof.
\end{proof}

\begin{Rmk}
When $N=5,13,17$, this is due to Rouse \cite{rouse2006zagier} and to Choi \cite{choi2006simple} (note that in this paper the factor $\varepsilon_p$ is missing in and after Lemma 1.5). Note that we not only generalize their results to a more general $N$, but also remove the dependence on existence and extend the duality to include holomorphic forms. Moreover, if $f_m$ exists and $a_m(-d)\neq 0$ with $-d>m$, then by Lemma 5.5, it is easy to see that $f_{d}'$ exists. Conversely, if $f_d'$ exists and $b_d(-m)\neq 0$ with $-m>d$, then by Lemma 5.5, Lemma 4.3 and Theorem 4.4, it is easy to see that $f_{m}$ exists.  Hence, we have the complete grids for Zagier duality, in the sense that all nonzero non-leading coefficients of reduced modular forms are covered by the duality in Theorem 5.7.
\end{Rmk}

We finish this section with a few examples.  In \cite{kim2013weakly}, for a special $\epsilon$, cases when the obstruction space is trivial were treated, namely when $N=8,12$ or $21$. To best illustrate the theory, here we consider the case when $N=15$.

We know that $\chi=\left(\frac{\cdot}{15}\right)$, and $\chi_3=\left(\frac{\cdot}{3}\right)$, $\chi_5=\left(\frac{\cdot}{5}\right)$. There are four distinct sign vectors $\epsilon$:
\[\epsilon_1=(-1,-1),\quad \epsilon_2=(1,-1),\quad \epsilon_3=(-1,1),\quad \epsilon_4=(1,1).\]
Among them, $\epsilon_1$ and $\epsilon_2$ are dual to each other, and $\epsilon_3$ and $\epsilon_4$ are dual to each other.

Since in this case the signature $r$ of all possible discriminant forms $3^{\pm 1}\oplus 5^{\pm 1}$ satisfies $\frac{r}{2}\equiv \frac{(3-1)+(5-1)}{2}\equiv 1\imod 2$, we should consider odd weights instead. For simplicity, let us consider the case $k=-1$ and $2-k=3$. We consider examples for two $\epsilon$ in a moment, one of which has trivial obstructions while the other does not.

We first look at the data for weight $3$ homomorphic modular forms. We know that $S(15,3,\chi)=\mathbb Cg_1+\mathbb Cg_2$, with
\begin{align*}g_1&=q - 3q^4 - 3q^6 + 9q^9 + 5q^{10} +O(q^{15})\in S^{\epsilon_4}(15,3,\chi),\\
g_2&=q^2 - 3q^3 + 5q^5 - 7q^8 + 9q^{12}+O(q^{15})\in S^{\epsilon_1}(15,3,\chi).
\end{align*} The Eisenstein space $E(15,3,\chi)=\sum_i \mathbb C E^{\epsilon_i}$ with
\begin{align*}E^{\epsilon_1}&=\frac{1}{4} - \frac{5}{8}q^2 - \frac{5}{8}q^3 - \frac{13}{8}q^5 - \frac{85}{8}q^8 - \frac{105}{8}q^{12}+O(q^{15}),\\
E^{\epsilon_2}&=\frac{1}{4} + \frac{1}{2}q^3 + 6q^7 + \frac{15}{2}q^{10} +
\frac{21}{2}q^{12} + 21q^{13}+O(q^{15}),\\
E^{\epsilon_3}&= \frac{1}{4} + \frac{3}{2}q^5 + \frac{5}{2}q^6 + 5q^9 + 15q^{11} + 30q^{14}+O(q^{15}),\\
E^{\epsilon_4}&= \frac{1}{4} - \frac{1}{8}q -\frac{21}{8}q^4 - \frac{25}{8}q^{6} - \frac{41}{8}q^{9} - \frac{65}{8}q^{10}+O(q^{15}).
\end{align*}
We note that $E^{\epsilon_1}$ and $E^{\epsilon_4}$ are not reduced and their Fourier coefficients have big denominators, because of the existence of $g_1,g_2$. Such integrality will be considered in Section 6.

\begin{Exa} Let $\epsilon=\epsilon_1$ hence $\epsilon^*=\epsilon_2$. From above data, we see that there exist no obstructions for $A^\epsilon(15,-1,\chi)$. From the $\epsilon$-condition, we see that $f_m$ exists if and only if $m\equiv 0, 2,3,5,8,12\imod 15$. The basis of reduced modular forms for $A^\epsilon(15,-1,\chi)$ starts with:
\begin{align*}f_{-3}&=\frac{1}{2}q^{-3} -\frac{1}{2} + 3q^2 - \frac{1}{2}q^3 - 3q^5 - 3q^8 + 6q^{12}+O(q^{15}),\\
f_{-7}&=q^{-7} - 6 + 12q^2 + 33q^3 + 39q^5 - 140q^8 - 144q^{12}+O(q^{15}),\\
&\vdots
\end{align*}
On the other hand, the basis of reduced forms for $A^{\epsilon^*}(15,3,\chi)$ begins with $f_0'=E^{\epsilon_2}$:
\begin{align*}f_{0}'&=\frac{1}{4} + \frac{1}{2}q^3 + 6q^7 + \frac{15}{2}q^{10} +
\frac{21}{2}q^{12} + 21q^{13}+O(q^{15}),\\
f_{-2}'&=q^{-2} - 3q^3 - 12q^7 - 45q^{10} + 36q^{12} + 146q^{13} +O(q^{15}),\\
&\vdots
\end{align*}
We can easily detect the Zagier duality for these basis elements, by ignoring the first columns in these two tables and viewing one table horizontally and the other vertically.
\end{Exa}

\begin{Exa}
Now let $\epsilon=\epsilon_3$ and $\epsilon^*=\epsilon_4$. Because of the existence of $g_1$, there is a non-trivial obstruction condition for $A^\epsilon(15,-1,\chi)$. The if $f_m$ exists, the $\epsilon$ condition says $m\equiv 0, 5,6,9,11,14\imod 15$. By Lemma 5.5, we see that $f_m$ exists if and only if $m\neq -1$ and $m\equiv 0,5,6,9,11,14\imod 15$.

The basis of reduced modular forms for $A^\epsilon(15,-1,\chi)$ starts with
\begin{align*}
f_{-4}&=q^{-4} + 3q^{-1} + 3 - 7q^5 + 3q^6 - 21q^9 - 11q^{11} + 44q^{14} +O(q^{15}),\\
f_{-6}&=\frac{1}{2}q^{-6} + 3q^{-1} + \frac{7}{2} + 21q^5 - \frac{49}{2}q^6 + 19q^9 - 147q^{11} +
99q^{14} + O(q^{15}),\\
f_{-9}&=\frac{1}{2}q^{-9} - 9q^{-1} + 4+ 99q^5+48q^6   -275q^9+360q^{11}  -2160q^{14} + O(q^{15}),\\
&\vdots&
\end{align*}
On the other hand, the basis of reduced modular forms for $A^{\epsilon^*}(15,3,\chi)$ starts with
\begin{align*}
f_{1}'&=q - 3q^4 - 3q^6 + 9q^9 + 5q^{10} +O(q^{15}),\\
f_{0}'&=\frac{1}{4} -3q^4 - \frac{7}{2}q^{6} - 4q^{9} - \frac{15}{2}q^{10}+O(q^{15}),\\
f_{-5}'&=\frac{1}{2}q^{-5} + 7q^4 - 21q^6 - 99q^9 + 67q^{10} +O(q^{15}),\\
&\vdots&
\end{align*}
Here $f_1'=g_1$ and $f_0'$ can be obtained by $E^{\epsilon_4}+\frac{1}{8}g_1$. The duality is also clear from these two tables.
\end{Exa}

\section{Integrality of Reduced Modular Forms}

\noindent
Borcherds \cite{borcherds1999gross} raised the question on the existence of a basis with integral Fourier coefficients for vector-valued modular forms associated with Weil representations, to which McGraw \cite{mcgraw2003rationality} gave an affirmative answer.
In this section, we consider a related but different problem, that is, the integrality for Fourier coefficients of reduced modular forms. 

Let $k$ be an integer such that $k\neq 1$. In this section, we consider the following question:
\begin{Que*}
For any reduced modular form $f_m=\sum_na(n)q^n$, do we always have $s(n)a(n)\in\mathbb Z$?
\end{Que*}

Such integrality first appeared in \cite{kim2012rank} and then in \cite{kim2013weakly}, where such integrality is crucial since $s(n)a(n)$ represents the multiplicity of some root in a generalized Kac-Moody superalgebra.  Such a question concerns the existence of a \emph{Miller-like} basis for weakly holomorphic modular forms with $\epsilon$-condition, and because of the isomorphism, it is essentially the integrality of the corresponding basis for vector-valued modular forms. Although numerical evidence indicates an affirmative answer to the above question, at present time, we do not know how to prove this systematically for all $N$. In the rest of this section, we will illustrate how to computationally prove it for any fixed $N$.

Recall that we denote by $f_m$ the reduced modular form of order $m$, when it exists and the data $N,k,\epsilon$ are clear in the context, and by $f'_m$ the reduced modular form in $A^{\epsilon^*}(N,2-k,\chi)$. We write $f_m=\sum_na_m(n)q^n$. Let $m_{\epsilon}$ denote the maximal $m$ such that $f_m'$ exists. If we would like to emphasize the sign vector, we shall write $f_{m,\epsilon}$, $a_{m,\epsilon}(n)$, and $f_{m,\epsilon}'$ accordingly.

The following lemma reduces the question to testing a finite number of reduced modular forms.
\begin{Lem}
Assume that for all $n\in\mathbb Z$ and $m\geq -N-m_\epsilon$, we have $s(n)a_m(n)\in \mathbb Z$. Then $s(n)a_m(n)\in\mathbb Z$, for all $m,n\in\mathbb Z$.
\end{Lem}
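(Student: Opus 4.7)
The plan is to proceed by downward induction on $m$. The given hypothesis handles the base case $m \geq -N - m_\epsilon$, and I will treat the inductive step $m < -N - m_\epsilon$ by constructing an auxiliary form $g \in A^\epsilon(N,k,\chi)$ that (i) shares the leading term $\tfrac{1}{s(m)}q^m$ of $f_m$, (ii) has $s(n) a_g(n) \in \mathbb{Z}$ for every $n$, and (iii) differs from $f_m$ by an integer combination of reduced forms of order strictly greater than $m$. The integrality of $s(n)a_m(n)$ will then follow by the induction hypothesis.

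The key observation will be that multiplication by $j(N\tau)$, where $j$ is Klein's modular invariant, preserves $A^\epsilon(N,k,\chi)$ together with the integrality property. Indeed $j(N\tau)$ is a $\Gamma_0(N)$-invariant modular function of weight zero and trivial character whose integer Fourier coefficients are supported entirely on powers $q^{Nl}$. Consequently, for any $f \in A^\epsilon(N,k,\chi)$ and any $n$ with $\chi_p(n) = -\epsilon_p$ for some $p \mid N$, every term in $a_{j(N\tau)f}(n) = \sum_{N \mid l} a_j(l/N)\, a_f(n-l)$ has $n - l \equiv n \pmod{p}$, forcing $a_f(n-l) = 0$. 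Moreover $(n, N) = (n - l, N)$ whenever $N \mid l$, so $s(n) = s(n-l)$; combined with $a_j(l/N) \in \mathbb{Z}$, this shows $s(n) a_{j(N\tau) f}(n) \in \mathbb{Z}$ whenever $s(n) a_f(n) \in \mathbb{Z}$.

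For the inductive step, fix $m < -N - m_\epsilon$ with $f_m$ existing and set $g := j(N\tau) \cdot f_{m+N}$. First I would verify that $f_{m+N}$ exists: since $m + N \equiv m \pmod{p}$ for every $p \mid N$ and $f_m$ exists, $m+N$ is an $\epsilon$-integer, and since $m + N < -m_\epsilon$, Lemma 5.5 confirms existence. Using $s(m) = s(m+N)$, the leading term of $g$ is $\tfrac{1}{s(m)}q^m$, and the previous paragraph gives $s(n) a_g(n) \in \mathbb{Z}$ for every $n$. The difference $g - f_m$ lies in $A^\epsilon(N,k,\chi)$ with order strictly above $m$, so Proposition 5.3 yields a finite decomposition $g - f_m = \sum_{M > m} c_M f_M$ over reduced-form indices $M > m$.

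To conclude, I would match Fourier coefficients at each $q^M$ with $M$ a reduced index greater than $m$. By Definition 5.1, $a_{f_{M'}}(M) = 0$ whenever $M' \neq M$ is another reduced-form index (by order reason if $M' > M$, by reducedness of $f_{M'}$ applied at $n=M$ together with the existence of $f_M$ if $M' < M$), and $a_{f_m}(M) = 0$ similarly. Matching coefficients thus isolates $c_M = s(M) a_g(M) \in \mathbb{Z}$. Substituting into $s(n) a_m(n) = s(n) a_g(n) - \sum_M c_M\, s(n) a_M(n)$, every summand on the right is an integer by the induction hypothesis applied to $M > m$. The main obstacle is identifying an operation that simultaneously shifts $q$-orders by $-N$, preserves integrality, and respects both the modular and the $\epsilon$-structure on $A^\epsilon(N,k,\chi)$; multiplication by $j(N\tau)$ meets all three requirements at once, after which what remains is triangular bookkeeping among reduced forms.
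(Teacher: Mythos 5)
Your proof is correct and follows essentially the same route as the paper: multiply by a power of $j(N\tau)$ to shift the order down by a multiple of $N$ (which preserves the $\epsilon$-condition and the integrality of $s(n)a(n)$ since $s(n)=s(n-Nl)$), then peel off reduced forms of higher order and invoke the induction hypothesis. The only difference is cosmetic --- you apply a single factor $j(N\tau)$ to $f_{m+N}$ and descend one step of size $N$ at a time, whereas the paper applies $j(N\tau)^l$ to a form $f_{m_0'}$ already in the base range; your version also spells out the existence of $f_{m+N}$ and the extraction $c_M=s(M)a_g(M)$ more carefully than the original.
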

\begin{proof}
Consider any reduced modular form $f_{m'}$ with $m'<-N-m_\epsilon$. There exists integers $-N-m_\epsilon\leq m_0'<m_\epsilon$ and $l>1$ such that $m'=-Nl+m_0'$. The existence of $f_{m'}$ implies that of $f_{m_0'}$ by Lemma 5.4 and Lemma 5.5. Consider now
\[g=j(N\tau)^{l}f_{m_0'}=\sum_nb(n)q^n\in A^\epsilon(N,k,\chi).\]
Here $j=q^{-1}+744+O(q)$ is the weight $0$ modular form of level $1$. Since $j$ has integral Fourier coefficients, by the assumption on $f_{m_0'}$, we see that $b(n)s(n)\in\mathbb Z$ for each $n$.

Now $g$ and $f_{m'}$ share the same lowest power term, and we must have that
\[f_{m'}=g-\sum_{m>m'} s(m)b(m)f_m.\]
Hence $s(n)a_{m'}(n)=s(n)b(n)-s(m)b(m)s(n)a_{m}(n)\in\mathbb Z$ by the assumption and induction on $m$. We are done.
\end{proof}

To consider the integrality of a fixed reduced modular form, Sturm's Theorem (\cite{sturm1987congruence}, see also \cite[Corollary 3.2]{kim2013weakly}) will be useful. In \cite{kim2013weakly}, we applied a trick to the weight $0$ reduced forms $f_{-1}$ where only the constant term is possibly half-integral, and the fact that constant functions are modular forms for $\Gamma_1(N)$ is important. We end this section with the following example, which, in particular, deals with the cases $f_{-m}$ when $(m,N)>1$.

\begin{Exa} Let us treat the simplest case $N=3$ and $k=-1$. Firstly, we have $\chi=\left(\frac{\cdot}{3}\right)$, $\epsilon_1=+1$ and $\epsilon_2=-1$. Since $S(3,3,\chi)=\{0\}$, no obstructions exist for $A^\epsilon(3,-1,\chi)$ for each $\epsilon$. The bound in Lemma 6.3 is now $-3$.
To establish the integrality for all reduced modular forms, we only have to consider the integrality of $f_{-2,\epsilon_1}$ and $f_{-3, \epsilon_1}$ for $A^{\epsilon_1}(3,0,\chi)$, and
$f_{-1,\epsilon_2}$ and $f_{-3, \epsilon_2}$ for $A^{\epsilon_2}(3,0,\chi)$.
Explicitly the first few term of these modular forms are
\begin{align*}
f_{-1,\epsilon_2}&=q^{-1} + 9 - 82q^2 + 189q^3 - 892q^5 + 1782q^6 - 6234q^8 +O(q^{9}),\\
f_{-2,\epsilon_1}&=q^{-2} - 27 + 328q - 7128q^3 + 24854q^4 - 221859q^6 + 591632q^7 +O(q^{9}),\\
f_{-3,\epsilon_1}&=\frac{1}{2}q^{-3} - 36 - 1701q - 50058q^3 - 499608q^4 - 4023392q^6 -
27788508q^7+O(q^{9}),\\
f_{-3,\epsilon_2}&=\frac{1}{2}q^{-3} + 45 + 16038q^2 + 50058q^3 + 2125035q^5 + 4023310q^6 +89099838q^8+O(q^{9}).
\end{align*}
The coefficients for $f_{-3,\epsilon_1}$ and $f_{-3,\epsilon_2}$ are not integral for some large powers, for example their coefficients for $q^{45}$ are both half integral. The computation of these Fourier expansions involves the following $\eta$-quotients:
$H_1=\eta(\tau)^{-3}\eta(3\tau)^9$ and $H_2=\eta(\tau)^9\eta(3\tau)^{-3}$.
The integrality of $f_{-1,\epsilon_2}$ follows from the fact that $H_1f_{-1,\epsilon_2}\in M(3,2,1)$ and the Sturm bound is then $\frac{2}{3}$. The integrality of $f_{-2,\epsilon_1}$ follows from the fact that $H_1^2f_{-2,\epsilon_1}\in M(3,5,\chi)$ and the Sturm bound is then $\frac{5\cdot 8}{12}<4$.

To deal with the rest of two reduced modular forms, note first that for $i=1,2$, $H_1^3H_2f_{-3,\epsilon_i}\in M(3,11,\chi)$.
The Sturm bound in both cases is $\frac{11\cdot 8}{12}<8$. By Corollary 3.2 in \cite{kim2013weakly}, we see that all of the three modular forms
\[2f_{-3,\epsilon_1}, \quad 2f_{-3,\epsilon_2}\quad f_{-3,\epsilon_1}+f_{-3,\epsilon_2}\]
have integral coefficients. On the one hand, if $3\mid n$, then $s(n)a_{-3,\epsilon_i}(n)\in\mathbb Z$ for $i=1,2$. On the other hand, if $3\nmid n$, then one of $a_{-3,\epsilon_1}(n)$ and $a_{-3,\epsilon_2}(n)$ is zero. But the sum of these two coefficient is integral, so both of them are integral.
\end{Exa}

\vskip 0.5 cm

\addcontentsline{toc}{chapter}{Bibliography}
\bibliographystyle{amsplain}
\bibliography{paper}

\end{document}